\let\origsection=\section \def\section{\@ifstar{\origsection*}{\mysection}} 
\def\mysection{\@startsection{section}{1}\z@{.7\linespacing\@plus\linespacing}{.5\linespacing}{\normalfont\scshape\centering\S}}
\renewcommand{\PrintDOI}[1]{\doi{#1}}
\let\polishlcross=\l
\def\l{\ifmmode\ell\else\polishlcross\fi}
\DeclareMathSymbol{\lsb@l}{\mathalpha}{letters}{`l}
\def\qand{\quad\text{and}\quad}
\def\qqand{\qquad\text{and}\qquad}
\let\emptyset=\varnothing
\let\setminus=\smallsetminus
\def\moverlay{\mathpalette\mov@rlay}
\def\mov@rlay#1#2{\leavevmode\vtop{   \baselineskip\z@skip \lineskiplimit-\maxdimen
   \ialign{\hfil$\m@th#1##$\hfil\cr#2\crcr}}}
\newcommand{\charfusion}[3][\mathord]{
    #1{\ifx#1\mathop\vphantom{#2}\fi
        \mathpalette\mov@rlay{#2\cr#3}
      }
    \ifx#1\mathop\expandafter\displaylimits\fi}
\newcommand{\PP}{\mathds{P}}
\renewcommand*{\Pr}{\PP}
\newcommand*{\E}{\EE}
\newtheorem{theorem}{Theorem}
\newtheorem{lemma}[theorem]{Lemma}
\newtheorem{prop}[theorem]{Proposition}
\newtheorem{fact}[theorem]{Fact}
\newtheorem{claim}[theorem]{Claim}
\newtheorem{cor}[theorem]{Corollary}
\let\theta=\vartheta
\let\rho=\varrho
\let\phi=\varphi
\def\tF{\widetilde{F}}
\def\NN{\mathds N}
\def\PP{\mathds P}
\def\EE{\mathds E}
\def\cQ{{\mathcal Q}}
\def\bchi{\bar\chi}
\def\qI{q_{\rm I}}
\def\qII{q_{\rm II}}
\DeclareMathOperator{\aut}{aut}
\begin{document}

\title{Ramsey properties of random graphs and Folkman numbers}

\author{Vojt\v{e}ch R\"{o}dl}
\address{Department of Mathematics and Computer Science, 
Emory University, Atlanta, USA}
\email{rodl@mathcs.emory.edu}

\author{Andrzej Ruci\'nski}
\address{A. Mickiewicz University, Department of Discrete Mathematics, Pozna\'n, Poland}
\email{rucinski@amu.edu.pl}

\author{Mathias Schacht}
\address{Fachbereich Mathematik, Universit\"at Hamburg, Hamburg, Germany}
\email{schacht@math.uni-hamburg.de}

\thanks{V.~R\"odl was supported  by NSF grants DM 080070. 
	A.~Ruci\'nski was supported by the Polish NSC grant~2014/15/B/ST1/01688 
	and parts of the research 
	were performed during visits at Emory University (Atlanta).
	M.~Schacht was supported through the \emph{Heisenberg-Programme} of the DFG\@.}

\keywords{Ramsey theory, Folkman's theorem, random graphs, container method}
\subjclass[2010]{05D10 (primary), 05C80 (secondary)}

\begin{abstract} For two graphs, $G$ and $F$, and an integer $r\ge2$ we write $G\rightarrow (F)_r$ if every $r$-coloring of the edges of $G$
results in a monochromatic copy of $F$. In 1995, the first two authors  established a threshold edge
probability for the Ramsey property $G(n,p)\to (F)_r$, where $G(n,p)$ is a random graph
obtained by including each edge of the complete graph on $n$ vertices, independently, with
probability $p$.  The original proof was based on the regularity lemma of
Szemer\'edi and this led to  tower-type dependencies between the involved parameters.  Here, for $r=2$, we provide  a self-contained proof of a
quantitative version of  the Ramsey threshold theorem with only double exponential dependencies between the
constants. As a corollary we obtain a double exponential upper bound on the 2-color Folkman numbers. By a different proof technique, a similar result was obtained independently
by Conlon and Gowers.
\end{abstract}

\maketitle

\section{Introduction}\label{intro}
For two graphs, $G$ and $F$, and an integer $r\ge2$ we write $G\rightarrow (F)_r$ if every $r$-coloring of the edges of $G$
results in a monochromatic copy of $F$. By a copy we mean here a subgraph of $G$ isomorphic to $F$.
Let $G(n,p)$ be the binomial random graph, where each of $\binom n2$ possible edges is present,
independently, with probability $p$.
In \cite{rr} the first two authors  established a threshold edge
probability for the Ramsey property $G(n,p)\to (F)_r$.

For a graph $F$,  let $v_F$ and $e_F$ stand for, respectively, the number of vertices and edges of~$F$. Assuming $e_F\ge1$, define
\begin{equation}\label{eq:dF}
d_{F}=
 \begin{cases}
   \frac{e_{F}-1}{v_{F}-2}&\text{if}\quad e_{F}>1\\
   \frac12&\text{if}\quad e_{F}=1
 \end{cases}\,,
\end{equation}
and
  \begin{equation}\label{eq:mF}
  m_{F}=\max\{d_{H}\colon H\subseteq F\ \text{and}\ e_{H}\geq
  1\}\,.
  \end{equation}
Let $\Delta(F)$ be the maximum vertex degree in $F$. Observe that $m_F=\tfrac12$ for every $F$ with $\Delta(F)=1$, while for every $F$ with
$\Delta(F)\ge2$ we have $m_F\ge1$. Moreover, for every $k$-vertex graph $F$,
\[
    m_F\leq m_{K_k}=\frac{k+1}{2}\,.
\]
We now state the main result of \cite{rr} in a slightly abridged form.

\begin{theorem}[\cite{rr}]
\label{95} For every integer $r\ge2$ and a graph $F$ with $\Delta(F)\ge2$ there exists a constant
$C_{F,r}$ such that if $p=p(n)\ge C_{F,r}n^{-1/m_F}$ then
$$\lim_{n\to\infty}\PP(G(n,p)\rightarrow (F)_r)=1.$$
\end{theorem}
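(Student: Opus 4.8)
The plan is to prove the statement by a container-type argument for colorings, combined with a supersaturation (embedding) result. Fix $r=2$ for concreteness (the general $r$ is analogous). First I would set up the relevant hypergraph: let the vertex set be $E(K_n)$ and put a hyperedge for every copy of $F$ in $K_n$; a ``bad'' $2$-coloring of $G(n,p)$ is precisely an independent set in a certain auxiliary colored hypergraph, namely a pair $(R,B)$ partitioning $E(G(n,p))$ with neither $R$ nor $B$ containing a copy of $F$. So $G(n,p)\not\to(F)_2$ iff there is a partition of its edge set avoiding a monochromatic $F$, which means the edge set of $G(n,p)$ is contained in the union of two $F$-free subgraphs of $K_n$. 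The combinatorial heart is therefore a structural/counting statement about the family of $F$-free graphs on $n$ vertices, quantified well enough to beat the probability $p\ge C_{F,r}n^{-1/m_F}$.

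The key steps, in order, would be: \emph{(1)} Prove a \emph{balanced supersaturation} lemma for $F$ in graphs that are denser than the $F$-free threshold: if $\Gamma\subseteq E(K_n)$ has $|\Gamma|\ge \delta n^2$ (say) then $K_n$ (or $\Gamma$ itself) contains many copies of $F$, and crucially these copies are spread out — no small set of edges lies in too many of them (the relevant local sparseness is governed by $m_F$, i.e.\ every subgraph $H\subseteq F$ appears in at most $O(n^{v_H - 2}\cdot(\text{density})^{\text{something}})$ copies). \emph{(2)} Feed this into the \emph{hypergraph container theorem} to obtain a small family $\mathcal C$ of ``containers,'' subsets of $E(K_n)$, such that every $F$-free graph on $[n]$ is a subset of some $C\in\mathcal C$, each $C$ has at most $(\ex(n,F)+o(n^2))$ edges — more precisely at most $(1-1/(\chi(F)-1)+o(1))\binom n2$ edges, but for the Ramsey application the relevant point is $|C|\le (1-\gamma)\binom n2$ for some $\gamma=\gamma(F)>0$ — and $\log|\mathcal C|=O(n^{2-1/m_F}\log n)$. \emph{(3)} Since a bad coloring embeds $E(G(n,p))$ into $C_1\cup C_2$ for some pair $C_1,C_2\in\mathcal C$, and $|C_1\cup C_2|\le(1-\gamma^2+o(1))\binom n2$ (one needs here that the union of two $F$-free graphs still misses a positive fraction of edges — this uses that $F$ has a vertex of degree $\ge2$, hence $F$ is not a matching, hence $\ex(n,F)$-type bounds give $C_i$ misses a \emph{linear-in-}$n$\emph{, in fact quadratic} set of edges around each vertex; more carefully one argues $C_1\cup C_2\ne E(K_n)$ robustly), a union bound over the at most $|\mathcal C|^2$ pairs gives
\[
  \PP\big(G(n,p)\not\to(F)_2\big)\le |\mathcal C|^2\cdot(1-\gamma')^{p\binom n2}
  \le \exp\!\Big(O\big(n^{2-1/m_F}\log n\big)-\Omega\big(p n^2\big)\Big),
\]
which tends to $0$ once $p\ge C_{F,2}n^{-1/m_F}$ with $C_{F,2}$ large, because then $pn^2\gg n^{2-1/m_F}\log n$. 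Actually one must be slightly more careful: $pn^2 = C n^{2-1/m_F}$ only matches the exponent up to the $\log n$ factor, so either one proves a slightly stronger container bound ($\log|\mathcal C| = O(n^{2-1/m_F})$ after optimizing), or — as is standard — one works at $p = \omega(n^{-1/m_F})$ for the limit statement and notes the threshold form follows by a separate monotonicity/boosting argument, or one simply absorbs the $\log$ by taking the probability that $G(n,p)$ restricted to each container is too large, which is exponentially small in a way that wins. I would phrase the final inequality to make the $C_{F,2}$-dependence explicit.

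The main obstacle is Step~(1): establishing the \emph{balanced} supersaturation with the local-sparseness exponents exactly matching $m_F$, so that the container theorem yields $\log|\mathcal C| = O(n^{2-1/m_F}\log n)$ rather than something weaker. For cliques $F=K_k$ this is classical and clean; for general $F$ one must run an induction on subgraphs of $F$ controlled by $d_H$ over all $H\subseteq F$, and the bookkeeping of how many copies of $F$ pass through a given set of $j$ edges — bounded in terms of the $2$-density of the densest subgraph they span — is delicate. A secondary (but genuinely necessary) point is the robustness in Step~(3): I must ensure that the union of two containers cannot cover all of $K_n$, and in fact misses a constant fraction of edges; this is where the hypothesis $\Delta(F)\ge2$ enters, ruling out the degenerate matching case where $\ex(n,F)$ is linear and the whole argument (and indeed the theorem) changes character. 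Everything else — the container theorem itself, the union bound, the final asymptotic comparison of $pn^2$ with $n^{2-1/m_F}\log n$ — is routine once these two ingredients are in place.
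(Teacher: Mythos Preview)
Your route is the container method of Nenadov and Steger~\cite{Steger}, which is genuinely different from the paper's. The paper does not prove Theorem~\ref{95} directly: it establishes the quantitative Theorem~\ref{rg} (for $r=2$ only) by induction on $e_F$, exposing $G(n,p)$ in two rounds and building, for each two-coloring $\chi$ of the first round, an auxiliary graph $\Gamma_\chi$ of ``$\chi$-rich'' pairs that is shown to be $(\rho,d)$-dense (Claim~\ref{Gamma}); the elementary Ramsey property of such graphs (Corollary~\ref{mon}) then drives the induction, and Theorem~\ref{95} for $r=2$ falls out as an immediate corollary. Your approach is more conceptual and handles general $r$ uniformly; the paper's is self-contained (no container machinery) and produces the explicit double-exponential constants that are the paper's actual objective.

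However, your Step~(3) has a genuine gap. The bound $|C_1\cup C_2|\le(1-\gamma')\binom n2$ does \emph{not} follow from extremal information about the individual containers: two graphs each missing a $\gamma$-fraction of $E(K_n)$ can together cover all of $K_n$, so the $(1-\gamma^2)$ heuristic and the appeal to $\ex(n,F)$ are both wrong. What the container proof actually uses here is a \emph{Ramsey} supersaturation lemma: if $G_1,\dots,G_r\subseteq K_n$ each contain fewer than $\eps n^{v_F}$ copies of $F$, then $|E(K_n)\setminus\bigcup_j G_j|\ge\gamma'\binom n2$ (otherwise one extends to an $r$-coloring of $K_n$ with too few monochromatic copies of $F$, contradicting the quantitative form of $K_n\to(F)_r$). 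This is precisely where the Ramsey theory enters, and you have not supplied it. Separately, the $\log n$ discrepancy is resolved in~\cite{Steger} not by any of your three suggestions but via the fingerprint form of the container lemma: each $F$-free graph carries a certificate $T$ of size $O(n^{2-1/m_F})$ contained in it, and the factor $p^{|T_1|+\dots+|T_r|}$ from $T_j\subseteq G(n,p)$ combines with $(1-p)^{\gamma'\binom n2}$ to beat the count of fingerprint tuples at the sharp threshold $p=Cn^{-1/m_F}$.
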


\noindent The original proof of Theorem \ref{95} was based on the regularity lemma of
Szemer\'edi~\cite{Szem} and this led to  tower-type dependencies on the involved parameters. In
\cite{rrs} it was noticed that for \emph{two} colors the usage of the regularity lemma could be
replaced by a simple Ramsey-type argument. Here we follow that thread and for $r=2$ prove a
quantitative version of  Theorem \ref{95} with only double exponential dependencies between the
constants.

In order to state the result, we first define inductively four parameters indexed by the number of
edges of a $k$-vertex graph $F$. For fixed $k\geq 3$ we set
\begin{equation}\label{eq:L4-1}
    a_1=\frac12,\qquad b_1=\frac18, \qquad C_1=1,\qquad\text{and}\qquad n_1=1
\end{equation}
and for each $i=1,\dots,\binom{k}{2}-1$,  define
\begin{equation}\label{rec}
    a_{i+1}=\frac{a_i^{19k^4}}{2^{55k^6}},\quad
    b_{i+1}=\frac{a_i^{37k^2}}{2^{118k^4}}b_i^4,\quad
    C_{i+1}=\frac{2^{122k^4}}{b_i^{4}a_i^{37k^2}}C_i,\quad \text{and}\quad
    n_{i+1}=\frac{2^{14k^3}}{a_{i}^{4k}}n_{i}\,.
\end{equation}

Note that $a_i$ and $b_i$ decrease with $i$, while $C_i$ and $n_i$ increase.
Finally, for a graph $F$ on~$k$ vertices, denote by
$$
\mu_F=\binom nk\frac{k!}{\aut(F)}p^{e_F}$$
the expected number of  copies of $F$ in $G(n,p)$
and note that
 \begin{equation}\label{miu}
 \binom nk p^{e_F}\le\mu_F\le n^{k} p^{e_F}=n^{v_F} p^{e_F}.
 \end{equation}

For  a real number $\lambda>0$ we write $G\xrightarrow{\lambda}F$ if every 2-coloring of the edges of $G$ produces at least $\lambda$ monochromatic copies of $F$. We call a graph $F$ \emph{$k$-admissible} if $v_F=k$ and either $e_F=1$ or $\Delta(F)\ge2$.
Now, we are ready to state a quantitative version of Theorem~\ref{95}.

\begin{theorem}\label{rg}
For every $k\ge3$, every $k$-admissible graph $F$, and for all
$n\ge n_{e_F}$ and $p\geq C_{e_F}n^{-1/m_F}$ we have
$$\PP\left(G(n,p)\xrightarrow{a_{e_F}\mu_F}F\right)\ge 1-\exp(-b_{e_F}p\tbinom n2).$$
\end{theorem}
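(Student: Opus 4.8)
The plan is to prove Theorem~\ref{rg} by induction on the number of edges $e_F$, mirroring the recursive definition of $a_i,b_i,C_i,n_i$ in~\eqref{rec}. For the base case $e_F=1$ the graph $F$ is a single edge together with $k-2$ isolated vertices, $m_F=\tfrac12$, and the hypothesis reads $p\ge C_1n^{-2}=n^{-2}$. Since $F$ has only one edge, \emph{every} copy of $F$ in a $2$-colored graph is monochromatic, so the number of monochromatic copies of $F$ in $G(n,p)$ equals $e(G(n,p))\binom{n-2}{k-2}$, whose expectation is exactly $\mu_F$ by the computation behind~\eqref{miu}. A Chernoff bound gives $e(G(n,p))\ge\tfrac12 p\binom n2$ except with probability at most $\exp(-\tfrac18 p\binom n2)=\exp(-b_1 p\binom n2)$, and on this event the number of monochromatic copies is at least $\tfrac12\mu_F=a_1\mu_F$. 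This settles the base case with $C_1=1$, $n_1=1$.

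For the inductive step, let $F$ be $k$-admissible with $e_F=i+1\ge2$, so $\Delta(F)\ge2$; one checks that there is an edge $e$ of $F$ for which $F':=F-e$ (on the same $k$ vertices, isolated vertices allowed) is again $k$-admissible, with $e_{F'}=i$. As $F'\subseteq F$ and $e_{F'}\ge1$ we have $m_{F'}\le m_F$, hence $n^{-1/m_{F'}}\le n^{-1/m_F}$ for $n\ge1$; combined with the monotonicity $C_i\le C_{i+1}$, $n_i\le n_{i+1}$ noted after~\eqref{rec}, the assumptions $n\ge n_{i+1}$ and $p\ge C_{i+1}n^{-1/m_F}$ imply $n\ge n_i$ and $p\ge C_i n^{-1/m_{F'}}$. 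Therefore, outside an event of probability at most $\exp(-b_i p\binom n2)$, the graph $G:=G(n,p)$ satisfies $G\xrightarrow{a_i\mu_{F'}}F'$; fixing any $2$-coloring, pigeonhole produces at least $\tfrac12 a_i\mu_{F'}$ monochromatic copies of $F'$ in one colour, say red.

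The heart of the argument is a \emph{completion step}: one shows that, on a further event of probability at least $1-\exp(-b_{i+1}p\binom n2)$, every such red family of $F'$-copies already forces at least $a_{i+1}\mu_F$ monochromatic copies of $F$. Call the pair of vertices of a copy of $F'$ that must be joined to recover $F$ its \emph{link}. A red copy of $F'$ whose link is a red edge of $G$ completes to a red copy of $F$; so if $G$ had fewer than $a_{i+1}\mu_F$ monochromatic $F$-copies, then for all but an $o(1)$-fraction of the red $F'$-copies the link would be either \emph{absent} from $G$ or a \emph{blue} edge. The first alternative is excluded — uniformly over \emph{all} colorings and all choices of the red family — by a balanced-supersaturation/hypergraph-container argument on the auxiliary hypergraph on $E(K_n)$ whose edges are the edge sets of copies of $F$: at $p=\Theta(n^{-1/m_F})$ the links of a positive-density family of $F'$-copies in $K_n$ cannot be packed into $o(p\binom n2)$ pairs, so the colorings that could realize the first alternative are covered by a subexponential family of "containers," each of which $G(n,p)$ fails to lie inside except with probability $\exp(-\Theta(p\binom n2))$; a union bound over these containers, combined with the inductive event for $F'$ and the Chernoff-type events for the edge count and for codegrees, yields the bound $\exp(-b_{i+1}p\binom n2)$, the $2^{\Theta(k^6)}$-type factors and the fourth power $b_i^4$ in~\eqref{rec} being exactly the cost of counting and combining these estimates. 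In the remaining alternative one is left with a dense set of blue edges each sitting on a red $F'$-copy, and reapplying the inductive hypothesis for $F'$ to the blue graph (or to a suitably weighted sub-configuration) again manufactures at least $a_{i+1}\mu_F$ monochromatic copies of $F$, a contradiction.

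The main obstacle is the completion step, and within it the demand that the supersaturation/container estimate hold \emph{simultaneously for every $2$-coloring} of $E(G(n,p))$: a naive union bound over the $2^{e(G)}$ colorings is hopeless, so the "few monochromatic $F$" colorings must first be organized into a subexponential family of structured classes, after which the randomness of $G(n,p)$ can be invoked. Making the number of such classes small enough that the union bound still leaves a term of the form $\exp(-b_{i+1}p\binom n2)$, while keeping every dependence on $k$ at most double-exponential, is the delicate point; the explicit recursion~\eqref{rec} is then just the bookkeeping of this loss through the induction on $e_F$.
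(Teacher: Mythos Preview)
Your base case matches the paper exactly, and your inductive setup (choosing an edge $e$ so that $F'=F-e$ is again $k$-admissible, and checking $m_{F'}\le m_F$) is fine. The trouble is entirely in the ``completion step,'' which you correctly identify as the heart of the matter but only gesture at.

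Two concrete gaps. First, ``reapplying the inductive hypothesis for $F'$ to the blue graph'' does not make sense: the inductive hypothesis is a probabilistic statement about $G(n,p)$, not about an adversarially chosen monochromatic subgraph of it. Once the coloring is fixed there is no randomness left to invoke. Second, even if your container sketch shows that the links of a dense family of red $F'$-copies cannot mostly be non-edges of $G(n,p)$, you are left with many blue links on red $F'$-copies, and a blue edge on a red $F'$ is \emph{not} a monochromatic copy of $F$. You give no mechanism that turns this configuration into monochromatic $F$-copies; the phrase ``a suitably weighted sub-configuration'' is not a mechanism.

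The paper's argument is structurally different and does not use containers. It exposes $G(n,p)=G(n,p_{\rm I})\cup G(n,p_{\rm II})$ with $p_{\rm I}=\alpha p_{\rm II}$ for a very small $\alpha$. For a coloring $\chi$ of the first round, one builds an auxiliary graph $\Gamma_\chi$ on $[n]$ whose edges are the pairs closing many monochromatic $F_i$-copies in $G(n,p_{\rm I})$; a Cauchy--Schwarz/second-moment argument against the count of ``double creatures'' (two $F_i$-copies sharing the missing pair) shows $\Gamma_\chi$ is $(\rho,d)$-dense (Claim~\ref{Gamma}). An elementary Ramsey fact for dense graphs (Corollary~\ref{mon}) then yields $\gamma n^k$ monochromatic, say pink, copies of $F_{i+1}$ inside $\Gamma_\chi$; Janson's inequality plus Proposition~\ref{del} guarantee that many of these survive robustly in $(\Gamma_\chi^{\rm pink})_{p_{\rm II}}$. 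The win--win is: any extension $\bar\chi$ either colors many pink edges red --- each such edge closes many red $F_i$-copies from round one, producing red $F_{i+1}$ --- or leaves almost all pink edges blue, and then an entire surviving pink $F_{i+1}$ is blue. The union bound is only over the $2^{n^2p_{\rm I}}$ colorings of the sparse first round, and $\alpha$ is chosen precisely so that this factor is beaten by the $\exp(-\Theta(n^2p_{\rm II}))$ bound from the second round; the recursion~\eqref{rec} records exactly these losses, not the cost of a container count.

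A container proof of the threshold does exist (Nenadov--Steger), but it is a global argument enumerating ``bad'' colorings, not an edge-by-edge induction, and it does not proceed by applying an inductive Ramsey statement to a monochromatic subgraph. As written, your completion step is missing the key idea that makes either route work.
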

\noindent
Note that,  for $r=2$, Theorem \ref{95} is an immediate
corollary of Theorem \ref{rg}.

Another consequence of Theorem \ref{rg} concerns    Folkman numbers.  Given an integer $k\ge3$, the Folkman number $f(k)$ is the smallest integer $n$ for which there exists an $n$-vertex graph $G$ such that $G\to (K_k)_2$ but $G\not\supset K_{k+1}$.
In the special case of $F=K_k$ and $r=2$, Theorem \ref{rg}, with  $p=C_{\binom k2}n^{-\tfrac2{k+1}}$,
provides a lower bound on  $\PP(G(n,p)\rightarrow (K_k)_2)$. In Section \ref{proofcor}, by a standard application of the FKG inequality, we also estimate from below
$\PP(G(n,p)\not\supset K_{k+1})$, so that the sum of the two probabililities is  strictly greater than 1.  This, after a careful analysis of the involved constants, provides a self-contained
derivation  of a double exponential bound for $f(k)$.

\begin{cor}\label{main}
There exists an absolute constant $c>0$ such that for every $k\ge 3$
$$f(k)\le 2^{k^{ck^2}}.$$
\end{cor}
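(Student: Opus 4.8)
The plan is to deduce Corollary~\ref{main} from Theorem~\ref{rg} applied to $F=K_k$ — for which $v_F=k$, $e_F=\binom k2$ and $m_F=m_{K_k}=\tfrac{k+1}2$, so that $n^{-1/m_F}=n^{-2/(k+1)}$ — together with the FKG inequality applied to the event that $G(n,p)$ omits $K_{k+1}$. Fix $k\ge3$ and put $p=C_{\binom k2}\,n^{-2/(k+1)}$, so the hypotheses of Theorem~\ref{rg} hold whenever $n\ge n_{\binom k2}$. Since the number of monochromatic copies of $K_k$ in any $2$-colouring is a non-negative integer while $a_{\binom k2}\mu_{K_k}>0$, the event $\{G(n,p)\xrightarrow{a_{\binom k2}\mu_{K_k}}K_k\}$ is contained in $\{G(n,p)\to(K_k)_2\}$, so Theorem~\ref{rg} gives
\[
  \PP\bigl(G(n,p)\to(K_k)_2\bigr)\ \ge\ 1-\exp\!\bigl(-b_{\binom k2}\,p\tbinom n2\bigr).
\]
Moreover $\{G(n,p)\not\supset K_{k+1}\}$ is the intersection, over all $(k+1)$-element vertex sets $S$, of the decreasing events ``$S$ does not span $K_{k+1}$'', each of probability $1-p^{\binom{k+1}2}$, so the FKG inequality yields
\[
  \PP\bigl(G(n,p)\not\supset K_{k+1}\bigr)\ \ge\ \bigl(1-p^{\binom{k+1}2}\bigr)^{\binom n{k+1}}.
\]

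It now suffices to choose $n$ so that these two lower bounds sum to more than~$1$; for any such $n$, the events $\{G(n,p)\to(K_k)_2\}$ and $\{G(n,p)\not\supset K_{k+1}\}$ intersect, so some $n$-vertex graph $G$ has $G\to(K_k)_2$ and $G\not\supset K_{k+1}$, whence $f(k)\le n$. Using $1-x\ge\exp(-2x)$ for $0\le x\le\tfrac12$ (valid since $p$, and a fortiori $p^{\binom{k+1}2}$, will be tiny), it is enough to arrange $\bigl(1-p^{\binom{k+1}2}\bigr)^{\binom n{k+1}}>\exp\!\bigl(-b_{\binom k2}p\tbinom n2\bigr)$, for which it in turn suffices that
\[
  2\tbinom n{k+1}p^{\binom{k+1}2-1}\ <\ b_{\binom k2}\tbinom n2 .
\]
Substituting $p=C_{\binom k2}n^{-2/(k+1)}$ and using $\binom n{k+1}\le n^{k+1}/(k+1)!$ and $\binom n2\ge n^2/4$, the left side is at most $\tfrac{2}{(k+1)!}C_{\binom k2}^{\binom{k+1}2-1}n^{1+2/(k+1)}$ while the right side is at least $\tfrac14 b_{\binom k2}n^2$; since $1+\tfrac2{k+1}<2$, the displayed inequality holds as soon as
\[
  n\ \ge\ \Bigl(\tfrac{8}{b_{\binom k2}(k+1)!}\,C_{\binom k2}^{\binom{k+1}2-1}\Bigr)^{(k+1)/(k-1)} .
\]
Thus any $n$ that is at least $n_{\binom k2}$, at least this quantity, and large enough to make $p\le\tfrac12$ certifies $f(k)\le n$.

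It remains to estimate such an $n$ in terms of $k$ — the heart of the matter — which amounts to unwinding the coupled recursion~\eqref{rec} over the $\binom k2$ steps $i=1,\dots,\binom k2$. Writing $\alpha_i=-\log_2 a_i$, $\beta_i=-\log_2 b_i$, $\gamma_i=\log_2 C_i$ and $\nu_i=\log_2 n_i$, the recursion \eqref{rec} turns into the linear system $\alpha_{i+1}=19k^4\alpha_i+55k^6$, $\beta_{i+1}=4\beta_i+37k^2\alpha_i+118k^4$, $\gamma_{i+1}=\gamma_i+37k^2\alpha_i+4\beta_i+122k^4$ and $\nu_{i+1}=\nu_i+4k\alpha_i+14k^3$, with $\alpha_1=1$, $\beta_1=3$, $\gamma_1=\nu_1=0$; an easy simultaneous induction then gives $\alpha_i,\beta_i,\gamma_i,\nu_i\le(74k^6)^{\,i}$. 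Evaluating at $i=\binom k2<k^2/2$ shows that $\gamma_{\binom k2}$, $\beta_{\binom k2}$ and $\nu_{\binom k2}$ are all at most $2^{O(k^2\log k)}$, and since the logarithm of the $n$ chosen above is bounded by a fixed polynomial in $k$ times $\max\{\gamma_{\binom k2},\beta_{\binom k2}\}$ plus $\nu_{\binom k2}$, it is likewise $2^{O(k^2\log k)}$; hence $f(k)\le 2^{2^{O(k^2\log k)}}$. Finally $2^{O(k^2\log k)}\le k^{ck^2}$ for a suitable absolute constant $c>0$ and all $k\ge3$ (enlarging $c$ if needed to absorb the finitely many small $k$, using that every $f(k)$ is finite), so $f(k)\le 2^{k^{ck^2}}$, as claimed. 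The main obstacle is this final bookkeeping: controlling how the four parameters blow up through the $\binom k2$ iterations of~\eqref{rec} and verifying that the resulting doubly exponential bound stays below $2^{k^{ck^2}}$.
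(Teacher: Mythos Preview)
Your proposal is correct and follows essentially the same approach as the paper: apply Theorem~\ref{rg} with $F=K_k$ and $p=C_{\binom k2}n^{-2/(k+1)}$, bound $\PP(G(n,p)\not\supset K_{k+1})$ from below via FKG, choose $n$ so the two bounds sum to more than~$1$, and then control the threshold $n$ by unwinding the recursion~\eqref{rec}. The paper packages the FKG step as Lemma~\ref{fkg} and the parameter estimates as Proposition~\ref{e^e}, solving the recurrence for $\alpha_i=\log a_i$ explicitly and then bounding the others in turn; your simultaneous induction $\alpha_i,\beta_i,\gamma_i,\nu_i\le(74k^6)^{i}$ is a slightly slicker way to reach the same $2^{O(k^2\log k)}$ bound on the logarithms, but the content is identical.
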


\noindent  Independently,
a similar double exponential bound  (with arbitrarily many colors) was obtained by Conlon and
Gowers \cite{CG}. The method used in \cite{CG} is quite different from ours and allows for  a further generalization to hypergraphs. 
After  Theorem \ref{rg} as well as the result  in \cite{CG} had been proved, we learned that
 Nenadov and Steger \cite{Steger}  have found a new  proof of Theorem \ref{95} by means of the celebrated containers' method. In \cite{folk}, we used the ideas from \cite{Steger} to obtain the bound $f(k)\le 2^{O(k^4\log k)}$ which, at least for large $k$,
supersedes Colorary~\ref{main}. However, the advantage of  our approach here is that the proofs of both Theorem \ref{rg} and Corollary  \ref{main}, as opposed to those in \cite{folk},
are self-contained and, in case of Theorem \ref{rg}, incorporate the original ideas from \cite{rr}.

The paper is organized as follows. In Section \ref{proofofmain} we prove our main result, Theorem~\ref{rg}. This is preceded by Section \ref{prem} collecting preliminary results needed in  the main body of the proof.
 Section \ref{proofcor} is devoted to  a   proof of Corollary \ref{main}.

\section{Preliminary results}\label{prem}
Before we start with  the proof of Theorem~\ref{rg}, we need to recall abridged versions of two useful facts from \cite{JLR}*{Lemmas~2.52 and~2.51}
(see also \cites{rr,rrs}), which we formulate as Propositions~\ref{del} and~\ref{upper} below.

Given a set $\Gamma$ and a real number $p$, $0\le p\le 1$, let $\Gamma_p$ be the random binomial
subset of~$\Gamma$, that is, a subset obtained by independently including each element of~$\Gamma$
with probability~$p$. Further, given an increasing family $\mathcal Q$ of subsets of a set $\Gamma$
and an integer $h$, we denote by~$\mathcal Q_h$ the subfamily of $\mathcal Q$ consisting of the
sets $A\in \mathcal Q$ having the property that  all  subsets of $A$ with at least $|A|-h$ elements
still belong to $\mathcal Q$.

\begin{prop}\label{del}
Let $0<c<1$, $\delta=c^2/9$,  $Np\ge 72/\delta^2=2^33^6/c^4$, and $h=\delta Np/2$.
Then for every increasing family $\mathcal Q$ of subsets of an $N$-element set $\Gamma$ the following holds.
If
$$
\PP\left(\Gamma_{(1-\delta)p}\not\in\mathcal Q\right)\le\exp(-cNp)
$$
then
$$
\PP\left(\Gamma_p\not\in\mathcal{Q}_h\right)\le\exp(-\delta^2Np/9)\,.
$$
\end{prop}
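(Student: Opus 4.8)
\emph{Proof sketch.} The plan is to use the standard monotone coupling of $\Gamma_{(1-\delta)p}$ and $\Gamma_p$: draw independent uniform variables $U_x\in[0,1]$ for $x\in\Gamma$ and set $\Gamma_p=\{x:U_x\le p\}$ and $\Gamma_{(1-\delta)p}=\{x:U_x\le(1-\delta)p\}$, so that $\Gamma_{(1-\delta)p}\subseteq\Gamma_p$. The only feature of this coupling I will need is that, conditionally on $\Gamma_p=A$, each element of $A$ lies in $\Gamma_{(1-\delta)p}$ independently with probability $(1-\delta)p/p=1-\delta$; that is, conditionally $\Gamma_{(1-\delta)p}$ is obtained from $A$ by deleting each element independently with probability $\delta$, while no element outside $A$ is ever present.

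The crux is the elementary pointwise bound: for every $A\subseteq\Gamma$ with $A\notin\mathcal Q_h$,
\[
  \PP\bigl(\Gamma_{(1-\delta)p}\notin\mathcal Q \mid \Gamma_p=A\bigr)\ge\delta^{\,h}.
\]
Indeed, $A\notin\mathcal Q_h$ means exactly that there is a $B\subseteq A$ with $|A\setminus B|\le h$ and $B\notin\mathcal Q$ (one may take $B=A$ if $A\notin\mathcal Q$). Conditionally on $\Gamma_p=A$, with probability $\delta^{|A\setminus B|}\ge\delta^{h}$ none of the at most $h$ elements of $A\setminus B$ is kept, in which case $\Gamma_{(1-\delta)p}\subseteq B$; since $\mathcal Q$ is increasing and $B\notin\mathcal Q$, this forces $\Gamma_{(1-\delta)p}\notin\mathcal Q$. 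Summing $\PP(\Gamma_p=A)\,\PP(\Gamma_{(1-\delta)p}\notin\mathcal Q\mid\Gamma_p=A)$ over all $A\notin\mathcal Q_h$, bounding each conditional probability from below by $\delta^h$, and invoking the hypothesis gives
\[
  \exp(-cNp)\ \ge\ \PP\bigl(\Gamma_{(1-\delta)p}\notin\mathcal Q\bigr)\ \ge\ \delta^{\,h}\,\PP\bigl(\Gamma_p\notin\mathcal Q_h\bigr),
\]
so that $\PP(\Gamma_p\notin\mathcal Q_h)\le\delta^{-h}\exp(-cNp)=\exp\bigl(h\ln(1/\delta)-cNp\bigr)$.

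It remains to bound this last exponent by $-\delta^2Np/9$, and this is the only place needing a moment's care: the factor $\delta^{-h}$ is itself exponentially large in $Np$, so one must verify that it cannot overwhelm $\exp(-cNp)$. With $h=\tfrac12\delta Np$ and $c=3\sqrt\delta$ (from $\delta=c^2/9$),
\[
  h\ln(1/\delta)-cNp=\sqrt\delta\,Np\bigl(\sqrt\delta\,\ln(1/\sqrt\delta)-3\bigr),
\]
and the elementary inequality $t\ln(1/t)\le 1/\mathrm e$ for $t\in(0,1)$, applied with $t=\sqrt\delta$, makes the bracket at most $1/\mathrm e-3<-2.6$; hence $h\ln(1/\delta)-cNp\le-2.6\sqrt\delta\,Np\le-\delta^2Np/9$, the last step using $\sqrt\delta\ge\delta^2$. (The hypothesis $Np\ge 72/\delta^2$ turns out not to be needed for this argument, though it is of course harmless.) I do not foresee any real obstacle here; the one mild subtlety is simply recognising that the crude per-outcome estimate $\delta^h$, while exponentially small, is still dominated by the failure probability $\exp(-cNp)$, because $\tfrac12\delta\ln(1/\delta)=\sqrt\delta\cdot\sqrt\delta\,\ln(1/\sqrt\delta)$ is far smaller than $c=3\sqrt\delta$.
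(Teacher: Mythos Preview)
Your argument is correct, and it is genuinely different from the paper's. The paper does not prove Proposition~\ref{del} from scratch: it invokes \cite{JLR}*{Lemma~2.52} as a black box, which (under the side condition $\delta(3+\log(1/\delta))\le c$) yields the two-term bound
\[
\PP(\Gamma_p\notin\cQ_h)\le 3\sqrt{Np}\,\exp(-cNp/2)+\exp(-\delta^2Np/8),
\]
and then spends the rest of the proof checking that side condition and absorbing the prefactor $3\sqrt{Np}$ and the additive constant into a single exponential; it is precisely this last absorption step that consumes the hypothesis $Np\ge 72/\delta^2$.

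Your monotone-coupling argument bypasses the external lemma entirely: the pointwise estimate $\PP(\Gamma_{(1-\delta)p}\notin\cQ\mid\Gamma_p=A)\ge\delta^h$ for $A\notin\cQ_h$ gives directly $\PP(\Gamma_p\notin\cQ_h)\le\delta^{-h}\exp(-cNp)$, and the remaining calculus (via $t\ln(1/t)\le 1/\mathrm{e}$) is clean. The payoff is a fully self-contained proof that, as you note, does not need $Np\ge 72/\delta^2$ at all, whereas the paper's route needs that hypothesis only to tidy up the quoted bound. The trade-off is that the paper's approach is shorter on the page because the real work is outsourced to~\cite{JLR}; yours is longer but elementary and sharper.
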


\begin{proof}
We shall apply~\cite{JLR}*{Lemma~2.52}, which is very similar to Proposition~\ref{del}. Lemma~2.52 from~\cite{JLR} states
that if $c$ and $\delta>0$ satisfy
\begin{equation}\label{eq:2.52-1}
    \delta(3+\log(1/\delta))\leq c
\end{equation}
and
\[
    \PP(\Gamma_{(1-\delta)p}\not\in\cQ)\le\exp(-cNp)
\]
then
\begin{equation}\label{eq:2.52-3}
    \PP(\Gamma_p\not\in\cQ_h)\le3\sqrt{Np}\exp(-cNp/2)+\exp(-\delta^2Np/8)\,.
\end{equation}
To this end we first note that by assumption of Proposition~\ref{del} we have $\delta <1/9$. Since
$\sqrt{x}(\log(1/x)$ is increasing for $x\in(0,1/\textrm{e}^2]$ it follows for every $\delta\le1/9$
that
\[
    \sqrt{\delta}\log(1/\delta)\leq\frac{\log(9)}{3} \leq 2\,.
\]
Consequently, $\sqrt{\delta}(3+\log(1/\delta))\leq 3$ and owing to the assumption $\delta=c^2/9$
this is equivalent to~\eqref{eq:2.52-1}.
Moreover,  since $Np\geq 2^33^6/c^4>(12/c)^2$ we have
$$
3\sqrt{Np}\le \exp({3\sqrt{Np}})\le \exp({cNp/4})\,.
$$
Hence,~\eqref{eq:2.52-3} yields
\begin{align*}
   \PP(\Gamma_p\not\in\cQ_h)&\le \exp(-cNp/4)+\exp(-\delta^2Np/8)
    \le
    2\exp(-\delta^2Np/8)\\
    &\le
    \exp(-\delta^2Np/8+1)\le \exp(-\delta^2Np/9),
\end{align*}
where the last inequality follows by our  assumption $Np\geq 72/\delta^2$. 
\end{proof}

The following result has appeared in \cite{JLR} as Lemma~2.51.  We state it here for $t=2$ only.
\begin{prop}[{\cite{JLR}}]\label{upper}
Let $\mathcal S\subseteq\binom\Gamma s$, $0\le p\le 1$, and $\lambda=|\mathcal S|p^s$. Then for every nonnegative integer $h$, with probability at least $1-\exp(-\tfrac{h}{2s})$, there exists a subset $E_0\subseteq \Gamma_p$ of size $h$ such that $\Gamma_p\setminus E_0$ contains at most $2\lambda$ sets from~$\mathcal S$.
\end{prop}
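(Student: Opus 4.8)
The plan is to run a simple deletion/first-moment argument inside the random set $\Gamma_p$. Let $X$ denote the number of sets $S\in\mathcal S$ that are fully contained in $\Gamma_p$; by linearity of expectation $\Ex{X}=|\mathcal S|p^s=\lambda$. If $X\le 2\lambda$ we may take $E_0=\emptyset$ and there is nothing to prove, so the whole difficulty lies in the event $X>2\lambda$. I would like to say that on this event one can always remove few vertices to destroy at least half of the copies, and that this event itself is exponentially unlikely — more precisely, that the probability that $\Gamma_p$ contains a ``robust'' clump of more than $2\lambda$ sets, none of which can be hit by a set of $h$ vertices, is at most $\exp(-h/2s)$.

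First I would set up the extremal/deletion part. Suppose $\Gamma_p$ contains more than $2\lambda$ sets from $\mathcal S$ and that no matter which $h$ vertices we delete, more than $2\lambda$ of these sets survive. I would then build, greedily, a large subfamily $\mathcal S'\subseteq\mathcal S$ of sets contained in $\Gamma_p$ that are pairwise ``spread out'' in the following weak sense: repeatedly pick a set $S\in\mathcal S$ still present and not yet covered, add it to $\mathcal S'$, and mark its $s$ vertices as used; the hypothesis that deleting any $h$ vertices leaves $>2\lambda$ sets ensures this process continues for at least $h/s$ rounds (if it stopped after fewer than $h/s$ rounds, the $<h$ used vertices would form a hitting set destroying all remaining copies, contradiction — here I must be a little careful about the exact counting, but $h/s$, or perhaps $h/(2s)$ to be safe, rounds are guaranteed). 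Thus on the bad event there exist at least $h/s$ \emph{pairwise vertex-disjoint} sets of $\mathcal S$, all contained in $\Gamma_p$.

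Next comes the probabilistic estimate. For a fixed collection of $t$ pairwise vertex-disjoint sets $S_1,\dots,S_t\in\mathcal S$, the events $\{S_j\subseteq\Gamma_p\}$ are independent, so $\Pr{S_1,\dots,S_t\subseteq\Gamma_p}=p^{st}=(\lambda/|\mathcal S|)^{\,?}$ — rather, $=\prod p^{s}=p^{st}$. Summing over all choices of an ordered $t$-tuple of disjoint sets and dividing by $t!$, the expected number of such disjoint $t$-families is at most $|\mathcal S|^t p^{st}/t!=\lambda^t/t!$. Taking $t=\lceil h/2s\rceil$ and using $\lambda^t/t!\le(e\lambda/t)^t$, I want this to be at most $\exp(-h/2s)$; if $\lambda$ is small relative to $t$ this is immediate, and the delicate case where $\lambda$ is comparable to $h/s$ is exactly where one leans on $X>2\lambda$ rather than $X>0$ — the factor $2$ slack in ``$2\lambda$'' is what makes the second-moment-free deletion bound go through. (If $\lambda$ is very large, a Chernoff bound on $X$ around its mean $\lambda$ would be an alternative route to showing $\Pr{X>2\lambda}$ is tiny, but the disjoint-family approach is cleaner and uniform.)

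The main obstacle I anticipate is getting the two pieces to interlock with the precise constants $2\lambda$ and $h/2s$ claimed in the statement: I need the deletion step to yield \emph{exactly} enough disjoint sets (about $h/s$, giving room to set $t\approx h/2s$) and I need the union bound $\lambda^t/t!\le\exp(-t)$ to hold in the regime that actually occurs, which is the regime $t\gtrsim\lambda$ forced by $X>2\lambda$. Everything else — linearity of expectation, independence over disjoint sets, the greedy covering argument — is routine. I would therefore budget most of the write-up to carefully quantifying the greedy step and verifying the inequality $e\lambda/t\le e^{-1}$ (equivalently $t\ge e^2\lambda$) or, where that fails, handling the small-$\lambda$ case directly.
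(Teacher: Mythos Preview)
The paper does not give its own proof of this proposition; it is simply quoted from~\cite{JLR} (Lemma~2.51, specialised to $t=2$). So there is no paper proof to compare against, and I evaluate your attempt on its own merits.

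Your skeleton --- greedy extraction of pairwise disjoint sets on the bad event, followed by a first-moment bound on such families --- is the right one, but the probabilistic step has a genuine gap. With $t\approx h/s$, the inequality $\lambda^t/t!\le\exp(-h/2s)$ is false whenever $\lambda$ is large compared with $h/s$ (for $\lambda\approx t$ the left side is of order $e^{t}$, not $e^{-t/2}$). Neither of your fallbacks repairs this: Chernoff does not apply to $X=\sum_S I_S$ because the indicators are not independent, and the sentence ``the regime $t\gtrsim\lambda$ forced by $X>2\lambda$'' is a non-sequitur --- $t$ is determined by $h$ and has nothing to do with~$X$. As written, your greedy step uses the bad event only to guarantee \emph{one} new disjoint set per round, which is why you end up with the useless bound $\PP(\exists\ t\ \text{disjoint sets})\le\lambda^t/t!$.

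The missing idea is to exploit ``$>2\lambda$'' at \emph{every} step of the greedy. On the bad event, for each $j=0,1,\dots,r:=\lfloor h/s\rfloor$, deleting the union of any $j$ already-chosen disjoint sets removes at most $js\le h$ elements, so more than $2\lambda$ sets of $\mathcal S$ still lie in $\Gamma_p$; hence there are more than $2\lambda$ choices for the next set. Consequently the number $W$ of \emph{ordered} $(r+1)$-tuples of pairwise disjoint members of $\mathcal S$ contained in $\Gamma_p$ satisfies $W>(2\lambda)^{r+1}$. Since disjointness gives $\EE W\le|\mathcal S|^{r+1}p^{s(r+1)}=\lambda^{r+1}$, Markov's inequality yields
\[
\PP(\text{bad})\ \le\ \PP\big(W>(2\lambda)^{r+1}\big)\ \le\ \frac{\lambda^{r+1}}{(2\lambda)^{r+1}}\ =\ 2^{-(r+1)}\ <\ 2^{-h/s}\ \le\ e^{-h/2s},
\]
using $r+1>h/s$ and $\ln 2>\tfrac12$. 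This is uniform in $\lambda$: the ratio $\lambda^{r+1}/(2\lambda)^{r+1}$ is always $2^{-(r+1)}$, so the large-$\lambda$ regime that breaks your version causes no trouble. In short, lower-bound the \emph{count} of disjoint tuples by $(2\lambda)^{r+1}$ rather than merely asserting that one tuple exists.
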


In the proof of Theorem \ref{rg} we will also use an elementary fact about $(\rho,d)$-dense graphs. For constants $\rho$ and $d$  with $0<d,\rho\le 1$ we call an
$n$-vertex  graph $\Gamma$ $(\rho,d)$-dense if every induced subgraph on  $m\ge \rho n$ vertices
contains at least $d(m^2/2)$ edges. It follows by an easy averaging argument that it suffices to
check the above inequality only for $m=\lceil \rho n\rceil$. Note also that every induced subgraph
of a $(\rho,d)$-dense $n$-vertex graph on at least $cn$ vertices is $(\tfrac{\rho}c,d)$-dense.

It turns out that for a suitable choice of the parameters, $(\rho,d)$-dense graphs enjoy a
Ramsey-like property. For a two-coloring of (the edges of) $\Gamma$ we call a sequence of vertices
$(v_1,\dots,v_\l)$ \emph{canonical} if for each $i=1,\dots,\l-1$ all the edges $\{v_i,v_j\}$, for
$j>i$ are of the same color.

\begin{prop}\label{rhodee} For every $\l\ge 2$ and $d\in(0,1)$, if $n\ge 2(4/d)^{\l-2}$ and $0<\rho\le (d/4)^{\l-2}/2$, then  every two-colored $n$-vertex
$(\rho,d)$-dense graph $\Gamma$ contains at least
\[
    f_n(\l):=\left(\frac14\right)^{\binom{\l+1}2}d^{\binom \l2} n^\l
\]
canonical sequences of length $\l$.
\end{prop}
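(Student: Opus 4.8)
The plan is to argue by induction on $\l$, extracting one vertex of the canonical sequence at a time while passing to a slightly smaller, still-dense induced subgraph. For the base case $\l=2$, note that a $(\rho,d)$-dense $n$-vertex graph has at least $d n^2/2$ edges (take $m=n\ge\rho n$), and each edge together with either of its endpoints, listed in either order, yields a canonical sequence of length~$2$; more simply, any edge $\{u,v\}$ gives the canonical pair $(u,v)$, so there are at least $dn^2/2$ of them, which comfortably exceeds $f_n(2)=(1/4)^3 d\, n^2$. For the inductive step, fix a two-coloring of $\Gamma$. I would like to choose a vertex $v_1$ that has many edges of one fixed color, say red, going into a large set $W$, and then recurse on $\Gamma[W]$. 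Concretely: since $\Gamma$ has at least $dn^2/2$ edges, some vertex $v_1$ has degree at least $dn/2$; among its neighbors, one color class, say red, has size at least $dn/4=:|W|$. Set $W$ to be that red-neighborhood (or a subset of it of exactly $\lceil dn/4\rceil$ vertices). Every vertex of $\Gamma$ will in turn serve as a candidate for $v_1$, so I should count over all such choices.

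The key point is that $\Gamma[W]$ is again dense enough to apply the inductive hypothesis: by the observation in the paragraph preceding the proposition, an induced subgraph of a $(\rho,d)$-dense graph on at least $cn$ vertices is $(\rho/c,d)$-dense, so with $c=d/4$ we get that $\Gamma[W]$ is $(4\rho/d,d)$-dense on $|W|\ge dn/4$ vertices. The hypotheses $n\ge 2(4/d)^{\l-2}$ and $\rho\le (d/4)^{\l-2}/2$ are exactly what is needed so that the corresponding hypotheses hold for $W$ with parameter $\l-1$: indeed $|W|\ge dn/4\ge 2(4/d)^{\l-3}$ and $4\rho/d\le (d/4)^{\l-3}/2$. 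By induction, $\Gamma[W]$ contains at least $f_{|W|}(\l-1)$ canonical sequences $(v_2,\dots,v_\l)$ of length $\l-1$; prepending $v_1$ (whose edges to all of $W\supseteq\{v_2,\dots,v_\l\}$ are red, hence monochromatic) produces a canonical sequence of length $\l$ in $\Gamma$.

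It remains to total up the count and check the arithmetic. Summing over all choices of $v_1\in V(\Gamma)$ — each giving rise to its own set $W=W(v_1)$ of size at least $dn/4$ — yields at least
\[
    n\cdot f_{\lceil dn/4\rceil}(\l-1)\ \ge\ n\cdot\left(\tfrac14\right)^{\binom{\l}2}d^{\binom{\l-1}2}\left(\tfrac{dn}{4}\right)^{\l-1}
    =\left(\tfrac14\right)^{\binom{\l}2+\l-1}d^{\binom{\l-1}2+\l-1}n^{\l}
    =\left(\tfrac14\right)^{\binom{\l+1}2}d^{\binom{\l}2}n^{\l}
\]
canonical sequences of length $\l$, using $\binom{\l}2+(\l-1)=\binom{\l+1}2$ and $\binom{\l-1}2+(\l-1)=\binom{\l}2$. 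Strictly speaking this overcounts nothing and undercounts (two different $v_1$'s give sequences with different first coordinates, so there is no double counting across the outer sum), so the bound $f_n(\l)$ follows. The one genuine subtlety — the main obstacle — is making sure the density parameter degrades in lockstep with the length parameter: each recursion multiplies the relevant $\rho$-threshold by $4/d$ and divides the vertex count by (at least) $d/4$, which is precisely why the powers $(4/d)^{\l-2}$ and $(d/4)^{\l-2}$ appear in the hypotheses; one must be a little careful that after $\l-2$ steps of this the innermost graph is still large enough to host the length-$2$ base case, i.e.\ that $dn/4\cdot(d/4)^{\l-3}\ge 1$, which is guaranteed by $n\ge 2(4/d)^{\l-2}$.
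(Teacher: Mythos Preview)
Your overall strategy matches the paper's: induction on $\l$, pick a vertex with a large monochromatic neighbourhood, recurse on that neighbourhood, then sum over choices of the first vertex. However, there is a genuine gap in the summation step. You write ``Summing over all choices of $v_1\in V(\Gamma)$ --- each giving rise to its own set $W=W(v_1)$ of size at least $dn/4$'' and use a factor of $n$ in the final count. But you only argued that \emph{some} vertex has degree at least $dn/2$; an arbitrary vertex of a $(\rho,d)$-dense graph need not have any prescribed minimum degree, so $|W(v_1)|\ge dn/4$ is not guaranteed for every $v_1$. The paper handles this by first proving (using $(\rho,d)$-density with $\rho\le 1/2$) that at least $n/2$ vertices have degree at least $dn/2$, and then sums only over those; this is exactly the ``at least $n/2$ vertices'' observation stated just before the proposition, and it is what justifies the factor $n/2$ (not $n$) in the outer sum.

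There is also an arithmetic slip: $\binom{\l}{2}+(\l-1)=\binom{\l+1}{2}-1$, not $\binom{\l+1}{2}$. So your displayed equality is off by a factor of $4$ in the $(1/4)$-exponent. Fortunately, once you replace the illegitimate factor $n$ by the correct $n/2$, the computation gives
\[
\frac{n}{2}\,f_{dn/4}(\l-1)\ \ge\ \frac{1}{2}\left(\frac14\right)^{\binom{\l+1}{2}-1} d^{\binom{\l}{2}} n^{\l}
= 2\left(\frac14\right)^{\binom{\l+1}{2}} d^{\binom{\l}{2}} n^{\l}
\ge f_n(\l),
\]
so the two errors happen to offset numerically --- but as written the argument is not valid. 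Insert the $n/2$-many-high-degree-vertices observation and correct the binomial identity, and your proof becomes essentially the paper's.
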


\begin{proof}
First, note that as long as $\rho\le 1/2$ every $(\rho,d)$-dense graph contains at least $n/2$
vertices with degrees at least $dn/2$. Indeed, otherwise a set of $m=\lceil (n+1)/2\rceil$ vertices
of degrees smaller than $dn/2$ would induce less than $mdn/4\le d(m^2/2)$ edges, a contradiction.

We prove Proposition \ref{rhodee} by induction on $\l$. For $\l=2$, every ordered pair of adjacent vertices is a canonical sequence and there are at least $2d\binom n2>f_n(2)$ such pairs if $n\ge2$. Assume that
the proposition is true for some $\l\ge2$ and consider an $n$-vertex $(\rho,d)$-dense graph $\Gamma$, where $\rho\le (d/4)^{\l-1}/2$ and $n\ge 2(4/d)^{\l-1}$. As observed above, there is a set $U$ of at least $n/2$ vertices with degrees at least $dn/2$. Fix one vertex $u\in U$ and let $M_u$ be a set of at least $dn/4$ neighbors of $u$ connected to $u$ by edges of the same color. Let $\Gamma_u=\Gamma[M_u]$ be the subgraph of $\Gamma$ induced  by the set $M_u$. Note that $\Gamma_u$ has $n_u\ge dn/4\ge 2(4/d)^{\l-2}$ vertices and  is $(\rho_u,d)$-dense with $\rho_u\le (d/4)^{\l-2}/2$. Hence, by the induction assumption, there are at least
$$
f_{n_u}(\l)
\ge
\left(\frac14\right)^{\binom{\l+1}2}d^{\binom \l2} \left(\frac{dn}{4}\right)^\l
=
\left(\frac14\right)^{\binom{\l}2+\l}d^{\binom {\l+1}2}n^\l
$$
canonical sequences of length $\l$ in $\Gamma_u$. Each of these sequences preceded by the vertex~$u$ makes a canonical sequence of length $\l+1$ in $\Gamma$. As there are at least $n/2$ vertices in $U$, there are at least
$$
\frac n2f_{n_u}(\l)
\ge
\left(\frac14\right)^{\binom{\l+2}2}d^{\binom {\l+1}2}n^{\l+1}$$
canonical sequences of length $\l+1$ in $\Gamma$. This completes the inductive proof of Proposition~\ref{rhodee}. 
\end{proof}

\begin{cor}\label{mon}
 For every $k\ge2$, every graph $F$ on $k$ vertices, and every $d\in(0,1)$, if $n\ge (4/d)^{2k}$
 and $0<\rho\le(d/4)^{2k}$, then  every two-colored
 $n$-vertex, $(\rho,d)$-dense graph $\Gamma$ contains at least
    $\gamma n^k$
 monochromatic  copies of $F$, where $\gamma=d^{2k^2}2^{-5k^2}.$
\end{cor}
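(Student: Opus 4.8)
The plan is to deduce the corollary from Proposition~\ref{rhodee} by choosing $\l$ large enough that a canonical sequence of length $\l$ must contain a monochromatic copy of $F$, and then counting how many of the canonical sequences give rise to distinct copies. First I would recall the key structural feature of a canonical sequence $(v_1,\dots,v_\l)$: for each $i$ the vertex $v_i$ sends edges of a single color, say $\chi(i)\in\{1,2\}$, to all later vertices $v_j$, $j>i$. Consequently, if $I\subseteq\{1,\dots,\l\}$ is a set of indices on which $\chi$ is constant, then the vertices $\{v_i\colon i\in I\}$ span a monochromatic clique (in that color) among the edges with both endpoints in $\{v_1,\dots,v_\l\}$ — here we only need edges between a vertex and \emph{later} vertices of the sequence, so the restriction of $\chi$ being constant on the first $|I|-1$ elements of $I$ suffices, but taking $\chi$ constant on all of $I$ is cleaner. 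By pigeonhole, any canonical sequence of length $\l=2k-1$ has a subset of $\ge k$ indices on which $\chi$ is constant, hence contains a monochromatic copy of $K_k$, and in particular a monochromatic copy of every $k$-vertex graph $F$ (as a subgraph of that clique).

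Next I would set $\l=2k-1$ and apply Proposition~\ref{rhodee}. One checks the hypotheses: for $n\ge(4/d)^{2k}$ we certainly have $n\ge 2(4/d)^{\l-2}$, and for $\rho\le(d/4)^{2k}$ we have $\rho\le(d/4)^{\l-2}/2$, since $\l-2=2k-3<2k$ and $(d/4)\le 1/4$. Thus $\Gamma$ contains at least $f_n(2k-1)=\left(\tfrac14\right)^{\binom{2k}2}d^{\binom{2k-1}2}n^{2k-1}$ canonical sequences of length $2k-1$. Each such sequence yields at least one monochromatic ordered $k$-tuple forming a copy of $K_k\supseteq F$; I would rather count ordered $k$-tuples directly. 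Every monochromatic copy of $K_k$ on a vertex set $S$ that arises this way is determined by an ordered $k$-subset of $\{v_1,\dots,v_{2k-1}\}$, and conversely a given monochromatic copy of $F$ on a fixed ordered vertex set $(w_1,\dots,w_k)$ is ``responsible for'' at most the number of canonical sequences of length $2k-1$ that contain those $k$ vertices in that relative order with matching colors: crudely, at most $n^{k-1}$ such sequences (the remaining $k-1$ slots range over at most $n$ vertices each). Dividing, $\Gamma$ contains at least
\[
\frac{f_n(2k-1)}{n^{k-1}}
= \left(\tfrac14\right)^{\binom{2k}2}d^{\binom{2k-1}2}n^{k}
\]
ordered monochromatic copies of $F$, hence at least $\left(\tfrac14\right)^{\binom{2k}2}d^{\binom{2k-1}2}n^{k}/k!$ unordered ones; absorbing $k!$ and comparing exponents, $\binom{2k}2\le 2k^2$ and $\binom{2k-1}2\le 2k^2$, so this is at least $d^{2k^2}2^{-5k^2}n^k=\gamma n^k$ for all $k\ge2$, after checking the numerical slack ($2^{-2\binom{2k}2}\cdot(k!)^{-1}\ge 2^{-5k^2}$, which holds comfortably).

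The main obstacle is the bookkeeping in the counting step: translating ``many canonical sequences'' into ``many monochromatic copies of $F$'' requires bounding the multiplicity with which a single copy of $F$ (or of the ambient $K_k$) is counted among the canonical sequences, and doing so wastefully enough to keep the algebra trivial but tightly enough that the constants still collapse to $d^{2k^2}2^{-5k^2}$. A cleaner alternative, which I would likely adopt to avoid delicate multiplicity estimates, is to run the induction in Proposition~\ref{rhodee} with $\l=2k-1$ but only pass down to subgraphs on which the colors are \emph{fixed in advance} according to a pattern realizing $F$ — however, since Proposition~\ref{rhodee} is already proved as a black box, the extraction argument above is the most economical route, and the only real care needed is to verify the two inequalities $n\ge(4/d)^{2k}\ge 2(4/d)^{2k-3}$ and $\rho\le(d/4)^{2k}\le(d/4)^{2k-3}/2$ and then the final exponent comparison.
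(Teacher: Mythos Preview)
Your approach is essentially the paper's: apply Proposition~\ref{rhodee} to get many canonical sequences, note that each contains a monochromatic $K_k$ (hence a monochromatic copy of $F$), and divide by the maximum number of canonical sequences containing a fixed such $K_k$. The only cosmetic difference is that the paper takes $\l=2k-2$ rather than your $\l=2k-1$: among $v_1,\dots,v_{2k-3}$ pigeonhole already produces $k-1$ vertices with a common forward color, and these together with $v_{2k-2}$ form the monochromatic $K_k$, so one fewer vertex suffices.

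There is, however, a genuine gap in your multiplicity bound. You assert that a fixed ordered $k$-tuple $(w_1,\dots,w_k)$ is extracted from at most $n^{k-1}$ canonical sequences of length $2k-1$, because ``the remaining $k-1$ slots range over at most $n$ vertices each''. But fixing the \emph{relative order} of the $w_j$ does not fix which of the $2k-1$ positions they occupy; there are still $\binom{2k-1}{k}$ choices for those positions before filling the other $k-1$ slots. The correct multiplicity is thus $\binom{2k-1}{k}\,n^{k-1}$ for ordered tuples, or equivalently $(2k-1)_k\,n^{k-1}$ for unordered copies of $K_k$ --- precisely analogous to the paper's factor $(2k-2)_k\,n^{k-2}$ at $\l=2k-2$. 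Fortunately the slack in your exponent comparison absorbs this missing factor: since $(2k-1)_k<(2k)^k\le 2^{k^2}$ for $k\ge2$, one still obtains
\[
\frac{f_n(2k-1)}{(2k-1)_k\,n^{k-1}}
=\frac{1}{(2k-1)_k}\Bigl(\frac14\Bigr)^{\binom{2k}{2}}d^{\binom{2k-1}{2}}n^k
\ge 2^{-k^2}\cdot 2^{-4k^2+2k}\cdot d^{2k^2}\,n^k
\ge \gamma n^k,
\]
so the conclusion stands once the multiplicity is corrected.
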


\begin{proof} Every canonical sequence $(v_1,\dots, v_{2k-2})$  contains a monochromatic  copy of $K_k$.
Indeed, among the vertices $v_1,\dots, v_{2k-3}$, some  $k-1$ have the same color on all the
``forward'' edges. Therefore, these vertices together with vertex $v_{2k-2}$ form a monochromatic
copy of $K_k$. On the other hand, every such copy is contained in no more than
\[
	k!\binom{2k-2}kn^{k-2}=(2k-2)_kn^{k-2}
\] 
canonical sequences of length $2k-2$. Finally, every copy
of $K_k$ contains at least one copy of $F$, and different copies of $K_k$ contain different copies
of $F$. Consequently, by Proposition \ref{rhodee}, every two-colored
 $n$-vertex, $(\rho,d)$-dense graph $\Gamma$ contains at least
 \[
    \frac{f_n(2k-2)}{(2k-2)_kn^{k-2}}=\frac1{(2k-2)_k}\left(\frac14\right)^{\binom{2k-1}2}d^{\binom{2k-2}2} n^k
    >
    \frac{d^{2k^2}}{2^{5k^2}}n^k
 \]
 monochromatic  copies of $F$. 
 \end{proof}

\section{Proof of Theorem \ref{rg}}\label{proofofmain}

\subsection{Preparations and outline}\label{prepout}
For given $n\in\NN$, $p\in(0,1)$, and a $k$-vertex graph $F$ we denote by $X_F$ the random variable counting the number of copies of $F$ in $G(n,p)$.
We also recall that $\mu_F=\EE X_F$.

For fixed $k\geq 3$ we prove Theorem~\ref{rg} by induction on $e_F$.
We may assume $n\ge k$, as for~$n<k$ we have $\mu_F=0$ and there is nothing to prove.

\paragraph*{Base case.}
Let $F_1$ be a graph consisting of one edge and $k-2$ isolated vertices. Note 
that~$m_{F_1}=1/2$
(see~\eqref{eq:mF}) and for every two-coloring of the edges of $G(n,p)$ every copy of~$F_1$ in
$G(n,p)$ is monochromatic. Clearly,
$$X_{F_1}=\binom{n-2}{k-2}X_{K_2}\quad\mbox{ and }\quad\mu_{F_1}=\binom{n-2}{k-2}\mu_{K_2}= \binom{n-2}{k-2}\binom n2 p.$$
 Thus,  by Chernoff's bound (see, e.g.,~\cite{JLR}*{ineq.~(2.6)}) we have
$$
    \PP\left(X_{F_1}\le \frac12\mu_{F_1}\right)=\PP\left(X_{K_2}\le \frac12\mu_{K_2}\right)
    \le \exp\left(-\frac18\binom n2p\right)\,,
$$
which holds for any values of $p$ and $n$.
Hence, Theorem~\ref{rg} follows for $F=F_1$ and
with the  constants $a_{1}=1/2$, $b_{1}=1/8$, and $C_{1}=n_{1}=1$
as given in~\eqref{eq:L4-1}.

\paragraph*{Inductive step.}
Given a graph $G$, an edge $f$ of $G$ and a nonedge $e$, that is an edge of the complement of $G$, we denote by $G-f$ a graph obtained from $G$ by removing $f$, and by~$G+f$ a graph obtained by adding $e$ to $G$.
Let $F_{i+1}$ be a graph with $i+1\geq 2$ edges and maximum degree $\Delta(F_{i+1})\geq 2$. If $i+1\geq 3$, then we can remove one edge
from $F_{i+1}$ in such a way that the resulting graph $F_i$ still contains at least one vertex of degree at least two, i.e., $\Delta(F_{i})\geq 2$.
If $i+1=2$, the graph $F_{i+1}=F_2$ consists of a path of length two and $k-3$ isolated vertices and removing any of the two edges results in the graph~$F_i=F_1$.
In either case, we may fix an edge $f\in E(F_{i+1})$ such that the graph  $F_i=F_{i+1}-f$ is $k$-admissible.
Hence, we can assume that Theorem~\ref{rg} holds for $F_i$ and for the constants $a_i$, $b_i$, $C_i$, and~$n_i$ inductively defined
by~\eqref{eq:L4-1} and~\eqref{rec}.

We have to show that Theorem~\ref{rg} holds for $F_{i+1}$ and constants $a_{i+1}$, $b_{i+1}$,
$C_{i+1}$, and~$n_{i+1}$ given in~\eqref{rec}. To this end, let $n\geq n_{i+1}$ and $p\geq
C_{i+1}n^{-1/m_{F_{i+1}}}$. We will expose the random graph $G(n,p)$ in two independent rounds
$G(n,p_{\rm I})$ and $G(n,p_{\rm II})$ and have 
\[
	G(n,p)=G(n,p_{\rm I})\cup G(n,p_{\rm II})\,.
\] 
For
that, we will fix $p_{\rm I}$ and $p_{\rm II}$ as follows. First we fix auxiliary
constants\footnote{The proof requires several auxiliary constants which at first may appear a bit
unmotivated. For example, we  now define $\delta_{\rm II}$, while $\delta_{\rm I}$ is to be defined
only later. Both $\delta$'s will be used in  applications of Proposition~\ref{del}.}
\begin{equation}\label{eq:dgd}
    d=\frac{a_i^2}{64^{k^2}}\,,\qquad
    \rho=\left(\frac{d}{4}\right)^{2k}\,,\qquad
    \gamma=\frac{d^{2k^2}}{2^{5k^2}}\,,
    \qquad
    \delta_{\rm II}=\frac{\gamma^4}{9\cdot 16^{k^2}}\,,
    \qand
    \alpha=\frac{\delta_{\rm II}^2\gamma}{36}\,.
\end{equation}
Then $p_{\rm I}$ and $p_{\rm II}\in(0,1)$ are defined by the equations
\begin{equation}\label{eq:p1p2}
    p=p_{\rm I}+p_{\rm II}-p_{\rm I}p_{\rm II}
    \qqand
    p_{\rm I}=\alpha p_{\rm II}\,.
\end{equation}
Clearly, we have
\begin{equation}\label{eq:ps}
    p\ge p_{\rm II}\ge\frac{p}{2}
    \ge\alpha p\ge\alpha p_{\rm II}=
    p_{\rm I}\ge\alpha\frac{p}{2}\,.
\end{equation}
We continue with a short outline of the main ideas of the forthcoming proof.

\subsubsection*{Outline.} At first we consider a two-coloring $\chi$, with colors red and blue,
of the edges of~$G(n,p_{\rm I})$ (the so-called first round). Owing to the induction assumption (Theorem~\ref{rg} for~$F_{i}$) we
note that with high probability the coloring $\chi$ yields many monochromatic copies of~$F_i$. We
will say that an unordered pair of vertices $e=\{u,v\}$ is \emph{$\chi$-rich}  if $G(n,p_{\rm
I})+e$ possesses ``many'' (to be defined later) copies of $F_{i+1}$, in which $e$ plays the role of
the edge~$f$ and the rest is a monochromatic copy of $F_i$. Let $\Gamma_\chi$ be an auxiliary graph
of all $\chi$-rich pairs. We will show that with `high' probability  (to be specified later),
$\Gamma_\chi$ is, in fact, $(\rho,d)$-dense for $d$ and $\rho$ as in~\eqref{eq:dgd} (Claim \ref{Gamma}).

 To this end, note that if the  monochromatic copies of~$F_i$  were clustered at relatively few
pairs, then we might fall short of proving Claim \ref{Gamma}. However,
we will show that in the random graph $G(n,p_{\rm I})$ it is unlikely that many copies of $F_i$ share the same 
pair of vertices. For that, we will consider the distribution of the graphs $T$ consisting of two copies of
$F_i$ which share the vertices of a missing edge $f$ (and possibly other vertices). We will show
that the number of those copies is of the same order of magnitude as its expectation (Fact~\ref{T}), and will also require that this
holds with high probability. Such a sharp concentration result
is known to be false, but Proposition~\ref{upper} asserts that it can be obtained on the cost of
removing a few edges of $G(n,p_{\rm I})$.

The auxiliary graph $\Gamma_\chi$ is naturally two-colored (by azure and pink), since every $\chi$-rich pair closes either
many blue or many red copies of $F_i$ (or both and then we pick the color for that edge, azure or blue,
arbitrarily). Consequently, Corollary \ref{mon} yields many monochromatic copies of $F_{i+1}$
in $\Gamma_\chi$ and at least half of them are colored, say, pink. That is, there are many copies of
$F_{i+1}$ in $\Gamma_\chi$ such that each of their edges closes many red copies of $F_i$ in
$G(n,p_{\rm I})$ under the coloring $\chi$. By Janson's inequality combined with Proposition~\ref{del}, with high probability, many pink copies will be still present in $\Gamma_{\chi}\cap G(n,p_{\rm II})$
 (second round) even after a fraction of edges is deleted. Thus, we are facing a `win-win' scenario. Namely, if an extension of $\chi$
 colors only few pink edges of $\Gamma_{\chi}\cap G(n,p_{\rm II})$ red then, by the above, many copies of $F_{i+1}$ in $\Gamma_{\chi}\cap G(n,p_{\rm II})$ have to be
colored completely blue.
 Otherwise, many pink edges of $\Gamma_{\chi}\cap G(n,p_{\rm II})$ are red, which, by the definition of a pink edge, results in many red copies of $F_{i+1}$ in $G(n,p)$.
 
\paragraph*{Useful estimates.} For the verification of several inequalities in the proof, it will
be useful to  appeal to the following lower bounds for $\gamma$, $\alpha$, and $\rho$ in terms of
powers of $a_i$ and $2$. From the definitions in~\eqref{eq:dgd}, for sufficiently large $k$, one
obtains the following bounds.
\begin{equation}\label{eq:dgdr}
\begin{split}
    \gamma
    =&\frac{a_i^{4k^2}}{2^{12k^4+5k^2}}
    \ge\frac{a_i^{4k^2}}{2^{13k^4}}\,,\\
    \alpha
    =&
    \frac{a_i^{36k^2}}{3^6\cdot 2^{108k^4+53k^2+2}}
    \ge
    \frac{a_i^{36k^2}}{2^{109k^4}}\,,\\
    \rho
    =&
    \frac{a_i^{4k}}{2^{12k^3+4k}}
    \ge
    \frac{a_i^{4k}}{2^{13k^3}}\,.
    \end{split}
\end{equation}

\medskip

\noindent We will also make use of the inequalities
\begin{equation}\label{npC}
np\ge C_{i+1},
\end{equation}
valid because $m_{F_{i+1}}\ge1$, and,  for every subgraph $H$ of $F_{i+1}$ with $v_H\ge3$,
\begin{equation}\label{npCH}
n^{v_H-2}p^{e_H-1}\ge C_{i+1}^{e_H-1},
\end{equation}
 valid because
$$m_{F_{i+1}}\ge d_H=\frac{e_H-1}{v_H-2}.$$
Of course, \eqref{npC} follows from \eqref{npCH}, by taking $H$ with $d_H=m_{F_{i+1}}$.

\subsection{Details of the proof}\label{detail}
\subsubsection*{First round.}
As outlined above, in the first round we want to show that with high probability
the random graph $G(n,p_{\rm I})$ has the property that for every two-coloring $\chi$ the auxiliary graph
$\Gamma_\chi$ (defined below) is $(\rho,d)$-dense. For that we set
\begin{equation}\label{eq:delta_I}
    \delta_{\rm I}=\frac{b_{i}^2}{36}
\end{equation}
and for a two-coloring $\chi$ call a pair $\{u,v\}$ of vertices  \emph{$\chi$-rich}
if it closes at least
\begin{equation}\label{eq:ell}
    \l=\frac{a_i}{4^{k^2}}(\rho n)^{k-2}p_{\rm I}^i
\end{equation}
monochromatic copies of $F_i$ in $G(n,p_{\rm I})$ to a copy of $F_{i+1}$. Then $\Gamma_\chi$
is an auxiliary $n$-vertex graph with the edge set being the set of \emph{$\chi$-rich} pairs.

Let $\mathcal{E}$ be the event (defined on $G(n,p_{\rm I})$) that  for every  two-coloring $\chi$
of $G(n,p_{\rm I})$ the graph $\Gamma_\chi$ is $(\rho,d)$-dense.

\begin{claim}\label{Gamma}
$$
 \PP(\mathcal{E})\ge1-\exp\left(-\frac{\delta_{\rm I}^2}{16^{k^2}}\binom{\rho n}{2}p_{\rm I}+n+2k^2\right)
$$
\end{claim}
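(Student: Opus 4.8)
The plan is to fix an arbitrary two-coloring $\chi$ of $G(n,p_{\rm I})$, show that with the stated probability $\Gamma_\chi$ is $(\rho,d)$-dense, and then pay a factor $2^{\binom{n}{2}}$ (absorbed by the $+n$ term, since $p_{\rm I}\binom{\rho n}{2}$ dominates) via a union bound over all colorings $\chi$. By the averaging remark preceding Proposition~\ref{rhodee}, it suffices to check, for each fixed set $W$ of $m=\lceil\rho n\rceil$ vertices, that $\Gamma_\chi[W]$ has at least $d\binom{m}{2}$ edges, i.e.\ at least $d\binom{m}{2}$ of the pairs inside $W$ are $\chi$-rich. So the real task is: with high probability, \emph{simultaneously} over all $\chi$ and all $W$ of size $m$, the set $W$ contains many $\chi$-rich pairs. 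I would first produce a lower bound on the total number of monochromatic copies of $F_i$ inside $G(n,p_{\rm I})[W]$: applying the induction hypothesis (Theorem~\ref{rg} for $F_i$) to the random graph $G(m,p_{\rm I})$ on each $W$ — here is where one needs $m\ge\rho n\ge n_i$ and $p_{\rm I}\ge C_i m^{-1/m_{F_i}}$, which follow from the recursion~\eqref{rec} and the bounds~\eqref{eq:dgdr}, \eqref{eq:ps} — gives at least $a_i\mu_{F_i}(m)$ monochromatic copies of $F_i$ for every coloring $\chi$, except with probability $\exp(-b_i p_{\rm I}\binom{m}{2})$ per set $W$; a union bound over the at most $2^n$ choices of $W$ keeps this small and explains the $+n$ in the exponent (and the $-\delta_{\rm I}^2/16^{k^2}$ versus $-b_i$ slack is exactly the room~\eqref{eq:delta_I} buys).

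The second ingredient controls clustering: I want that no pair $\{u,v\}$ closes too many copies of $F_i$, so that "many copies of $F_i$" forces "many rich pairs." This is where Fact~\ref{T} (the concentration of the count of "theta-graphs" $T$ = two copies of $F_i$ glued along a missing edge $f$) and Proposition~\ref{upper} enter: by Proposition~\ref{upper} applied with $\mathcal S$ the family of copies of such $T$'s (so $s=v_T$, $\lambda$ its expectation), with probability $1-\exp(-h/(2v_T))$ one can delete a set $E_0$ of $h$ edges from $G(n,p_{\rm I})$ so that the remaining graph has at most $2\lambda$ copies of $T$. Removing $h$ edges destroys at most $h\cdot(\text{copies of }F_i\text{ through a fixed edge})$ monochromatic copies of $F_i$, a lower-order correction provided $h$ is chosen appropriately (a small multiple of $p_{\rm I}\binom n2$, matching the target exponent). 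After this cleaning step, a counting/convexity argument bounds the number of pairs lying in few copies: if fewer than $d\binom{m}{2}$ pairs inside $W$ were $\chi$-rich, then the $\ge a_i\mu_{F_i}(m)$ monochromatic copies of $F_i$ would be distributed with at most $\l-1$ at each non-rich pair plus at most $2\lambda$ total among rich pairs' over-counts, and the arithmetic — this is where the precise choices of $\l$ in~\eqref{eq:ell} and $d$ in~\eqref{eq:dgd} are calibrated — yields a contradiction. Combining the failure probabilities for the two ingredients and the two union bounds gives the claimed estimate, the constant $2k^2$ in the exponent being slack to absorb polynomial-in-$n$ and constant factors (e.g.\ from $h/(2v_T)$ and from $\mu_{F_i}$-vs-$n^{v_{F_i}}p^{e_{F_i}}$ comparisons via~\eqref{miu}).

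The main obstacle I expect is the second ingredient: making the "few copies of $F_i$ per pair after deleting $E_0$" argument quantitatively tight enough that the loss is genuinely lower-order. One has to (i) verify that $\lambda=\E[\#T\text{'s}]$ is of order $n^2(n^{v_{F_i}-2}p_{\rm I}^{e_{F_i}})^2=n^2(\mu_{F_i}/n^2)^2$ — i.e.\ that the "two copies through one pair" count is dominated by the near-disjoint configurations, which is exactly the content of Fact~\ref{T} and uses $m_{F_{i+1}}\ge m_{F_i}$ together with~\eqref{npCH} to control every possible overlap subgraph $H$; (ii) check that the number of monochromatic $F_i$'s through a fixed pair is, after removing $E_0$, at most something like $2\sqrt{\lambda/\binom n2}$ by Cauchy--Schwarz, which is $\ll\l$ on all but a negligible fraction of pairs; and (iii) turn these into the density bound on $\Gamma_\chi[W]$ uniformly in $W$. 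All the necessary inequalities reduce, via~\eqref{eq:dgdr}, \eqref{eq:ps}, \eqref{npC}, and~\eqref{npCH}, to polynomial identities in $a_i$ and powers of $2$ and $k$; I would organize these as a short sequence of displayed inequalities rather than inlining them, and flag that the only conceptual step is the $T$-graph concentration, everything else being bookkeeping dictated by the recursion~\eqref{rec}.
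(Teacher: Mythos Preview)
Your plan has the right pieces (induction on $F_i$, the double-creature second-moment bound via Fact~\ref{T} and Proposition~\ref{upper}, then Cauchy--Schwarz to force many rich pairs), and the union bound over the $\le 2^n$ sets $W$ is exactly where the $+n$ in the exponent comes from. But there is a genuine structural gap: your proposed union bound over all two-colorings $\chi$ is both unnecessary and fatal. It is unnecessary because the inductive Ramsey statement already quantifies over \emph{all} colorings --- once $G(W,p_{\rm I})$ (or its robust version) has the property $G'\xrightarrow{a_i\mu}F_i$, this holds for every $\chi$ simultaneously, so the events you need are properties of the random graph alone and the only union bound required is over the sets $W$. And it would be fatal because the factor you quote, $2^{\binom{n}{2}}$, contributes $\Theta(n^2)$ to the exponent, whereas the main term $\frac{\delta_{\rm I}^2}{16^{k^2}}\binom{\rho n}{2}p_{\rm I}$ is of order $\rho^2 n^2 p_{\rm I}\ll n^2$; even replacing $\binom{n}{2}$ by $e(G(n,p_{\rm I}))\approx \binom{n}{2}p_{\rm I}$ does not help, since $\rho^2\cdot \delta_{\rm I}^2/16^{k^2}\ll 1$. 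So this cost cannot be ``absorbed by the $+n$ term.''

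There is a second, related gap. You apply the induction hypothesis directly to $G(W,p_{\rm I})$ and then delete the edge set $E_0$ coming from Proposition~\ref{upper}; but the many monochromatic copies of $F_i$ are guaranteed only in $G(W,p_{\rm I})$, not in $G(W,p_{\rm I})\setminus E_0$. Your proposed repair --- ``removing $h$ edges destroys at most $h$ times the maximum number of copies of $F_i$ through an edge'' --- requires a uniform upper bound on that maximum, which you do not have (and which can fail badly for individual edges). The paper closes this gap differently: it applies the induction hypothesis at the slightly smaller density $(1-\delta_{\rm I})p_{\rm I}$ and then invokes Proposition~\ref{del} to upgrade this to a \emph{robust} Ramsey property of $G(W,p_{\rm I})$, i.e., one that survives the deletion of any $\tfrac{\delta_{\rm I}}{2}\binom{\rho n}{2}p_{\rm I}$ edges. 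With that in hand, the edge removal from Proposition~\ref{upper} is harmless, and the Cauchy--Schwarz step (in the form $(\sum_{x_{uv}\ge\ell} x_{uv})^2\le e(\Gamma_\chi[U])\cdot\sum x_{uv}^2$) gives the density of $\Gamma_\chi[U]$ directly. Once you insert Proposition~\ref{del} and drop the spurious union bound over colorings, your outline becomes the paper's proof; the $+2k^2$ arises from the factor $t\le 4^{k^2}$ counting double-creature types.
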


Before giving the proof of Claim \ref{Gamma} we need one more fact.
 Let $\{T_1,T_2,\dots,T_t\}$ be the family of all pairwise
non-isomorphic graphs which are unions of two copies of $F_i$, say $F_i'\cup F_i''$, with the property that adding a single edge completes both, $F_i'$ and $F_i''$
to a copy of $F_{i+1}$. We will refer to these graphs  as \emph{double creatures} (of $F_i$).
Clearly, with some foresight of future applications,
\begin{equation}\label{eq:t}
    t\leq 2^{\binom{2k-2}{2}}\le2^{2k^2-4k}\le\frac{2^{2k^2-1}}{4\binom k2}\,.
\end{equation}
Let $X_j$ be the number of copies of $T_j$ in $G(U,p_{\rm I})$, $j=1,\dots,t$.
\begin{fact}\label{T}
For every $j=1,\dots,t$
$$\EE X_j\le(\rho n)^{2k-2}p_{\rm I}^{2i}\,.$$
\end{fact}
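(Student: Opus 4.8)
The plan is to bound $\EE X_j$ by summing over all ways to embed $T_j$ into the vertex set $U$ of size at most $\rho n$, and to show that the dominant contribution comes from the ``edge-heavy'' part of the computation, which is exactly captured by the exponent $2i$ on $p_{\rm I}$. First I would recall that $T_j$ is a union of two copies $F_i'\cup F_i''$ of $F_i$ which become copies of $F_{i+1}$ after adding a common edge $f=\{x,y\}$; in particular $T_j$ has at most $2k-2$ vertices (the two copies share at least the two endpoints of $f$), and exactly $2i=2e_{F_i}$ edges, since $F_i'$ and $F_i''$ are edge-disjoint (they differ precisely in which edges they carry, but any shared edge would have to lie in both, and adding $f$ to each produces $F_{i+1}$, so an edge common to both copies is fine — one must be a little careful here). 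The cleanest bound is $\EE X_j \le |U|^{v_{T_j}} p_{\rm I}^{e_{T_j}} \le (\rho n)^{v_{T_j}} p_{\rm I}^{2i}$, using $e_{T_j}=2i$ and $v_{T_j}\le 2k-2 \le 2k-2$.

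The remaining point is to replace the exponent $v_{T_j}$ by $2k-2$, i.e.\ to see that $(\rho n)^{v_{T_j}} \le (\rho n)^{2k-2}$, which holds as long as $\rho n \ge 1$. This follows from the hypotheses of Theorem~\ref{rg}: we have $n \ge n_{i+1}$ and by~\eqref{eq:dgdr} together with the recursion~\eqref{rec} the quantity $\rho n$ is comfortably larger than $1$ (indeed the proof of Corollary~\ref{mon} already needs $n \ge (4/d)^{2k}$, i.e.\ $\rho n \ge 1$, and the same bound is available here since $n_{i+1}$ was chosen large enough relative to $a_i$, hence relative to $d$ and $\rho$). So the only genuine content is the combinatorial observation that $v_{T_j}\le 2k-2$ and $e_{T_j}=2i$.

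The main obstacle — really the only place requiring care — is the claim $e_{T_j}=2i$: one must argue that the two copies $F_i'$ and $F_i''$ making up a double creature contribute $2i$ edges in total, i.e.\ that they are edge-disjoint. This is true because each of $F_i'$ and $F_i''$ is obtained from the same graph $F_{i+1}$ by deleting a single edge, but the edge deleted is the \emph{same} edge $f$ in both cases only if $F_i'=F_i''$; since a double creature is an unordered union $F_i'\cup F_i''$ where we allow $F_i'=F_i''$, the worst case for the edge count is actually $F_i'\ne F_i''$ sharing all of $F_{i+1}-\{f\}$'s structure except... — the correct statement is simply that $T_j$ has at most $e_{F_{i+1}}+e_{F_{i+1}} - (\text{shared edges}) $ edges, and since $e_{F_i} = i$, we get $e_{T_j}\le 2i$, which is all we need (an upper bound on $\EE X_j$ only requires an \emph{upper} bound on $v_{T_j}$ and a matching count of $p_{\rm I}$-factors, and here $p_{\rm I}\le 1$ lets us freely replace $e_{T_j}$ by any lower bound, so $e_{T_j}\ge$ something is what we'd want — but $p_{\rm I}<1$ means fewer edges gives a \emph{larger} expectation, so we actually need $e_{T_j}\ge 2i$). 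Thus the honest claim is $e_{T_j}\ge 2i$ coupled with $v_{T_j}\le 2k-2$, and I would verify $e_{T_j}\ge 2i$ by noting each copy $F_i'$, $F_i''$ has exactly $i$ edges and together with the completing edge $f$ each spans $i+1 = e_{F_{i+1}}$ edges; since $f\notin E(T_j)$, the union $F_i'\cup F_i''$ contains $E(F_i')\cup E(F_i'')$, and as $F_i'$, $F_i''$ are distinct copies sharing $f$'s endpoints, a short case analysis (or simply the fact that they are \emph{chosen} to witness two distinct $F_{i+1}$-copies through $f$) gives that they cannot share all $i$ edges, hence $|E(F_i')\cup E(F_i'')|\ge i+1\ge$ — no: we want $2i$. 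The resolution, which I'd state carefully in the final proof, is that a double creature is by definition allowed to be a single copy counted twice, so $t$ includes such degenerate $T_j$ with only $i$ edges; but then $X_j \le (\rho n)^{2k-2} p_{\rm I}^{i}$ would be the bound, \emph{not} $p_{\rm I}^{2i}$ — so either the definition implicitly requires $F_i'\ne F_i''$ edge-disjoint-off-$f$, or Fact~\ref{T} as stated needs $F_i',F_i''$ to share no edges, which I'd confirm is the intended reading, whence $e_{T_j}=2i$ exactly and the bound $\EE X_j\le (\rho n)^{2k-2}p_{\rm I}^{2i}$ is immediate from the trivial first-moment estimate $\EE X_j\le (\text{number of vertex-images})\cdot p_{\rm I}^{(\text{number of edge-images})}$.
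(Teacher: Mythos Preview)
Your approach has a genuine gap. The definition of a double creature $T_j = F_i' \cup F_i''$ does \emph{not} require the two copies to be edge-disjoint; they share at least the two endpoints of the completing edge $f$, but may share further vertices and edges (indeed, the later bound $\sum x_{uv}^2 \le 2\binom{k}{2}|DC(G_0)|$ in~\eqref{eq:xuv2} needs double creatures to account for \emph{all} ordered pairs of copies of $F_i$ through $\{u,v\}$, overlapping or not). Consequently $e_{T_j} = 2i - e_S$ where $S = F_i' \cap F_i''$, and in general $e_{T_j} < 2i$. Since $p_{\rm I} < 1$, this gives $p_{\rm I}^{e_{T_j}} > p_{\rm I}^{2i}$, so the naive first-moment bound $(\rho n)^{v_{T_j}} p_{\rm I}^{e_{T_j}}$ can exceed $(\rho n)^{2k-2} p_{\rm I}^{2i}$ unless one shows that the lost edges are compensated by the saved vertices. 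Your final paragraph tries to rescue this by assuming the ``intended reading'' is edge-disjointness, but that reading is wrong and would also break~\eqref{eq:xuv2}.

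The paper's proof supplies exactly the missing ingredient. Writing $v_{T_j} = 2k - v_S$ and $e_{T_j} = 2i - e_S$, the desired inequality is equivalent to $(\rho n)^{v_S} p_{\rm I}^{e_S} \ge (\rho n)^2$. The key observation is that $J := S + f$ is a subgraph of $F_{i+1}$ with $v_J = v_S$ and $e_J = e_S + 1$; when $v_S \ge 3$ one then invokes~\eqref{npCH} (which encodes $m_{F_{i+1}} \ge d_J$) together with the recursive choice of $C_{i+1}$ in~\eqref{rec} to absorb the factors of $\rho$ and $\alpha$ coming from $p_{\rm I} \ge (\alpha/2) p$. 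Your argument never uses $m_{F_{i+1}}$ or the size of $C_{i+1}$, and without them the bound fails for double creatures with nontrivial overlap.
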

\begin{proof}
Let $T:=T_j=F_i'\cup F_i''$ be a double creature and set $S=F'_i\cap F_i''$. Then the expected number of
copies of $T$ is bounded from above  by
\[
    \EE X_T\overset{\eqref{miu}}{\leq} (\rho n)^{v_T}p_{\rm I}^{e_T}=\frac{(\rho n)^{2k}p_{\rm I}^{2i}}
    {(\rho n)^{v_S}p_{\rm I}^{e_S}}\,,
\]
and it remains to show that
$$
(\rho n)^{v_S}p_{\rm I}^{e_{S}}\ge (\rho n)^{2}.
$$

 There is nothing to prove when $v_S=2$ (and thus $e_S=0$).
 Otherwise, pick a pair of vertices $f$ in $T$ such that both,
$F_i'+f$ and $F_i''+f$, are isomorphic to $F_{i+1}$. Then $J:=S+f\subseteq F_{i+1}$. Note that
$e_J=e_S+1$ and  $3\le v_J=v_S\le k$. Since $C_{i+1}\ge2/\alpha$,
\begin{align*}
    (\rho n)^{v_S}p_{\rm I}^{e_{S}}
    &\overset{\eqref{eq:ps}}{\ge}
    (\rho n)^{v_J}\left(\frac{\alpha}{2}\right)^{e_S}p^{e_{J}-1}
    \overset{\eqref{npCH}}{\geq}
    \rho^{v_S-2}\left(\frac{\alpha}{2}\right)^{e_S}C_{i+1}^{e_{S}}(\rho n)^2\\
    &\overset{\phantom{\eqref{eq:ps}}}{\ge}\rho^k\frac\alpha2 C_{i+1}(\rho n)^2
    \overset{\eqref{eq:dgdr}}{\geq}
    \frac12\frac{a_i^{4k^2}}{2^{13k^4}}
    \frac{a_i^{36k^2}}{2^{109k^4}} C_{i+1}(\rho n)^2
    \overset{\eqref{rec}}{\geq}
    \frac{2^{13k^4-1}}{b_i^{4}}C_i(\rho n)^2\ge (\rho n)^2
    \,.\qedhere
\end{align*}
\end{proof}

\begin{proof}[Proof of Claim~\ref{Gamma}]
 Let $\chi$ be a two-coloring of $G(n,p_{\rm I})$. Fix a set
$U\subseteq [n]$ with  $|U|=\rho n$ (throughout we assume that $\rho n$ is an integer) and consider
the random graph $G(n,p_{\rm I})$ induced on~$U$
$$
    G(U,p_{\rm I}):=G(n,p_{\rm I})[U]\,.
$$
By the induction assumption, if $\rho n\ge n_i$ and $p_i\ge C_i(\rho n)^{-1/m_{F_i}}$ then, with high probability, there are many monochromatic copies of $F_i$ in $G(U,p_{\rm I})$.
For technical reasons that will become clear only later, we want to strengthen the above Ramsey property so that it is resilient to deletion of a small fraction of edges. For that we apply the induction assumption to the random
graph $G(U,(1-\delta_{\rm I})p_{\rm I})$, followed by an application of Proposition~\ref{del}.
We begin by verifying the assumptions of
Theorem~\ref{rg} with respect to $F_i$ and~$G(U,(1-\delta_{\rm I})p_{\rm I})$.
First,  note that
\begin{equation}\label{eq:rhon}
 |U|=\rho n\geq \rho n_{i+1}
 \overset{\eqref{eq:dgdr}}{\geq}
 \frac{a_i^{4k}}{2^{13k^3}}n_{i+1}
 \overset{\eqref{rec}}{=}
 \frac{a_i^{4k}}{2^{13k^3}}\cdot\frac{2^{14k^3}}{a_{i}^{4k}}n_{i}
 =2^{k^3}n_i
 \ge n_i\,.
\end{equation}
It remains to check that
\begin{equation}\label{eq:p1check}
    (1-\delta_{\rm I})p_{\rm I}\geq C_i(\rho n)^{-1/m_{F_i}}\,.
\end{equation}
To this end, we simply note that using  $\delta_{\rm I}\leq 1/2$, $\rho\le1$, and $m_{F_{i+1}}\geq \max\left(1, m_{F_{i}}\right)$
we have
\[
    (1-\delta_{\rm I})p_{\rm I}
    \overset{\eqref{eq:ps}}{\ge}
    \frac{\alpha p}{4}
    \geq
    \frac{\alpha}{4}C_{i+1}\rho^{1/m_{F_{i+1}}}(\rho n)^{-1/m_{F_{i+1}}}
    \geq
    \frac{\alpha}{4}C_{i+1}\rho(\rho n)n^{-1/m_{F_i}}.
\]

Furthermore, we have
\begin{equation}\label{eq:alpharho2C}
    \frac{\alpha\rho}{4}C_{i+1}
    \overset{\eqref{eq:dgdr}}{\geq}
    \frac{a_i^{36k^2+4k}}{2^{109k^4+13k^3+2}}\cdot C_{i+1}
    \overset{\eqref{rec}}{=}
    \frac{a_i^{37k^2}}{2^{110k^4}}\cdot \frac{2^{122k^4}C_i}{b_i^4a_i^{37k^2}}
    =
    \frac{2^{12k^2}C_i}{b_i^4}
    \ge C_i\,.
\end{equation}
and~\eqref{eq:p1check} follows. 

Thus, we are in position to apply the induction assumption to~$G(U,(1-\delta_{\rm I})p_{\rm I})$ and  $F_i$. Let
\begin{equation}\label{eq:mui}
    \mu:=\mu^{\rho,\delta_{\rm I}}_{F_i}
    :=
    \binom{\rho n}{k}\frac{k!}{\aut(F_i)}((1-\delta_{\rm I})p_{\rm I})^{i}
    \geq
    \frac{1}{4^{k^2}}(\rho n)^{k}p_{\rm I}^i
\end{equation}
denote the expected number of copies of $F_i$ in $G(U,(1-\delta_{\rm I})p_{\rm I})$. By Theorem \ref{rg} we infer that
\begin{align}
\PP\left(G(U,(1-\delta_{\rm I})p_{\rm I})\xrightarrow{a_i\mu}F_i\right)
&\ge
1-\exp\left(-b_i(1-\delta_{\rm I})p_{\rm I}\tbinom{\rho n}2\right)\nonumber\\
&\ge
1-\exp\left(-\tfrac{b_i}{2}p_{\rm I}\tbinom{\rho n}2\right)\,.\label{eq:R1del}
\end{align}
Next we head for an application of Proposition~\ref{del} with $c=b_i/2$, $\delta=\delta_{\rm I}$, $N=\binom{\rho n}{2}$, and~$p_{\rm I}$.
Note that, indeed, $\delta_{\rm I}=b_i^2/36= c^2/9$ (see~\eqref{eq:delta_I}). Moreover, using $\rho n\geq 3$ (see~\eqref{eq:rhon})
and (\ref{npC}) we see that
\[
p_{\rm I}\binom{\rho n}{2}
\overset{\eqref{eq:ps}}{\geq}
\frac{\alpha p}{2}\cdot \rho n
\geq
\frac{\alpha\rho}{2}\cdot C_{i+1}
\overset{\eqref{eq:alpharho2C}}{\geq}
\frac{2^{12k^3+1}}{b_i^4}
\ge
\frac{72}{\delta_{\rm I}^2}
\]
and the assumptions of Proposition~\ref{del} are verified. From~\eqref{eq:R1del} we infer by Proposition~\ref{del}
that with probability
at least
\begin{equation}\label{eq:q1a}
    1-\exp\left(-\frac{\delta_{\rm I}^2}9\binom{\rho n}{2}p_{\rm I}\right)
\end{equation}
$G(U,p_{\rm I})$ has the property that for every subgraph $G'\subseteq G(U,p_{\rm I})$
with
\[
    \big|E(G(U,p_{\rm I}))\setminus E(G')\big| \leq \frac{\delta_{\rm I}}{2}\binom{\rho n}{2}p_{\rm I}
\]
we have
\begin{equation}\label{eq:robRamsey}
    G'\xrightarrow{a_i\mu} F_i\,.
\end{equation}

Our  goal is to show that, with high probability, any two-coloring $\chi$ of
$G(U,p_{\rm I})$ yields at least $d(|U|^2/2)$ $\chi$-rich edges,  and ultimately, by repeating this
argument for every set $U\subseteq [n]$ with $\rho n$ vertices, that~$\Gamma_\chi$ is
$(\rho,d)$-dense.
 The above `robust'  Ramsey property
(\ref{eq:robRamsey})
 means that after applying Proposition~\ref{upper} to $G(U,p_{\rm I})$ the resulting subgraph of
$G(U,p_{\rm I})$ will still have the Ramsey property with high probability.

Let $Y$ be the random variable counting the number of double creatures in $G(U,p_{\rm I})$.
It follows from Fact \ref{T} that
\begin{equation}\label{eq:expY}
    \EE Y
    \le t(\rho n)^{2k-2}p_{\rm I}^{2i}\,.
\end{equation}
Hence, by Proposition~\ref{upper}, applied for every $j=1,\dots, t$
to the families $\mathcal S_j$ of all copies of $T_j$ in $G(U,p_{\rm I})$
with
\begin{equation}\label{eq:h1}
    h_{\rm I}=\frac{\delta_{\rm I}}{2t}\binom{\rho n}{2}p_{\rm I}
\end{equation}
we conclude that
with probability at least
\begin{equation}\label{eq:q1b}
    1-\sum_{j=1}^{t}\exp\left(-\frac{h_{\rm I}}{2e(T_j)}\right)
    \geq
    1-t\exp\left(-\frac{h_{\rm I}}{2k^2}\right)
\end{equation}
there exists a subgraph  $G_0\subseteq G(U,p_{\rm I})$ with $|E(G(U,p_{\rm I})\setminus E(G_0)|\leq
th_{\rm I}$ such that $G_0$ contains at most  $2\EE Y$  double creatures. Since
$$
    th_{\rm I}
    \overset{\eqref{eq:h1}}{=}
    \frac{\delta_{\rm I}}{2}\binom{\rho n}{2}p_{\rm I},
$$
the robust Ramsey property~\eqref{eq:robRamsey} holds with $G'=G_0$.

Recall that a two-coloring $\chi$ of $G(n,p_{\rm I})$ is fixed. For $\{u,v\}\subset U$, let
$x_{uv}$ be the number of monochromatic copies of $F_{i}$ in $G_0$ which together with the pair
$\{u,v\}$ form a copy of~$F_{i+1}$. Owing to~\eqref{eq:robRamsey}, we have
\begin{equation}\label{eq:xuv}
    \sum_{\{u,v\}\in\binom{U}{2}}x_{uv}\geq a_i\mu.
\end{equation}
By  the above application of Proposition~\ref{upper}  we infer that
\begin{equation}\label{eq:xuv2}
    \sum_{\{u,v\}\in\binom{U}{2}}x^2_{uv}\leq 2\cdot\binom{k}{2}\cdot|DC(G_0)|\leq4\binom{k}{2}\EE Y
    \overset{\eqref{eq:expY},\eqref{eq:t}}{\leq}2^{2k^2-1}(\rho n)^{2k-2}p_{\rm I}^{2i}\,,
\end{equation}
where $DC(G_0)$ is the set of all double creatures in $G_0$. Recall that $\{u,v\}\in
E(\Gamma_{\chi})$ if it is $\chi$-rich, which is implied by $x_{uv}\geq \l$, where $\ell$ is
defined in ~\eqref{eq:ell}. We want to show that 
\[
	e(\Gamma_{\chi}[U])\geq \frac{d}{2}({\rho n})^2\,.
\] 
Since
$\l\le a_i\mu/(\rho n)^2$ (compare~\eqref{eq:ell} and~\eqref{eq:mui}), it follows
from~\eqref{eq:xuv} that
\[
\sum_{\substack{\{u,v\}\in\binom{U}{2}\\ x_{uv}\geq \l}}x_{uv}\ge  \frac{a_i\mu}{2}
\overset{\eqref{eq:mui}}{\geq}
    \frac{1}{2}\cdot\frac{a_i}{4^{k^2}}(\rho n)^{k}p_{\rm I}^i\,.
\]
Squaring the last inequality and applying the Cauchy-Schwarz inequality yields
\begin{align*}
    \left(\frac{1}{2}\cdot\frac{a_i}{4^{k^2}}(\rho n)^{k}p_{\rm I}^i\right)^2
    \le
    \Bigg(\sum_{\substack{\{u,v\}\in\binom{U}{2}\\ x_{uv}\geq \l}}x_{uv}\Bigg)^2
    &\overset{\phantom{\eqref{eq:xuv2}}}{\leq}
    e(\Gamma_\chi[U]) \sum_{\substack{\{u,v\}\in\binom{U}{2}\\ x_{uv}\geq \l}}x^2_{uv}\\
    &\overset{\eqref{eq:xuv2}}{\leq}
    e(\Gamma_\chi[U])\cdot 2^{2k^2-1}(\rho n)^{2k-2}p_{\rm I}^{2i}\,.
\end{align*}
Consequently,
\[
    e(\Gamma_\chi[U]) \geq \frac{a^2_i}{64^{k^2}}({\rho n})^2/2
    \geq
    \frac{a^2_i}{64^{k^2}}({\rho n})^2/2\overset{\eqref{eq:dgd}}{=}d({\rho n})^2/2\,.
\]
Summarizing the above, we have shown that if $G(U,p_{\rm I})$ has the robust Ramsey property for
$F_i$ (see~\eqref{eq:robRamsey}) and if  the conclusion of Proposition~\ref{upper} holds for all $j=1,\dots,t$, then
we infer $e(\Gamma_{\chi}[U])\geq d({\rho n})^2/2$. The  probability that at least one of these events fails
is at most (see~\eqref{eq:q1a} and~\eqref{eq:q1b})
\[
\exp\left(-\frac{\delta_{\rm I}^2}{9}\binom{\rho n}{2}p_{\rm I}\right) + t\exp\left(-\frac{h_{\rm
I}}{2k^2}\right)\,. \]
 Recalling that $t\leq 4^{k^2}$ (see \eqref{eq:t}) and the definition of
$h_{\rm I}$ in~\eqref{eq:h1}, Claim~\ref{Gamma} now follows by summing up these probabilities over
all choices of $U\subseteq [n]$ with $|U|= \rho n$. More precisely, using the union bound and the
estimate $\binom n{\rho n}\le 2^n$, we conclude that the probability that there is a coloring
$\chi$ for which the graph $\Gamma_\chi$ is not $(\rho, d)$-dense is

\begin{align*}
\PP(\neg\mathcal{E})
	&\le2^{n}
	\exp\left(-\frac1{9}\delta_{\rm I}^2\binom{\rho n}2p_{\rm I}\right)
         +2^n4^{k^2}\exp\left(-\frac{1}{k^24^{k^2}}\delta_{\rm I}\binom{\rho n}2p_{\rm I}\right)
	\\
    &\leq
    \exp\left(-\frac{\delta_{\rm I}^2}{16^{k^2}}\binom{\rho n}{2}p_{\rm I}+n+2k^2\right)\qedhere
\end{align*}
\end{proof}

This concludes the analysis of the first round.

\subsubsection*{Second round.}
Let $\mathcal B$ be the conjunction of $\mathcal{E}$ and the event that $|G(n,p_{\rm I})|\le
n^2p_{\rm I}$. In the second round we will condition on the event $\mathcal B$ and sum over all
two-colorings $\chi$ of $G(n,p_{\rm I})$. Formally, let $\mathcal A$ be the (bad) event that there
is a two-coloring of the edges of $G(n,p)$ with fewer than $a_{i+1}\mu_{F_{i+1}}$ monochromatic
copies of $F_{i+1}$. (That is, $\neg\mathcal A$ is the Ramsey property $G(n,p)\xrightarrow{a_{i+1}\mu_{F_{i+1}}} F_{i+1}$.) Further, given a two-coloring $\chi$ of $G(n,p_{\rm I})$, let
$\mathcal{A}_\chi$ be the event that there exists an extension of $\chi$ to a coloring $\bar \chi$
of $G(n,p)$ yielding altogether fewer than $a_{i+1}\mu_{F_{i+1}}$ monochromatic copies of
$F_{i+1}$.

The following pair of inequalities exhibit the skeleton of our proof of Theorem \ref{rg}:

\begin{equation}\label{ske1}
\PP(\mathcal A)\le \PP(\neg\mathcal{B})+\sum_{G\in\mathcal{B}} \PP\Big(\mathcal{A}|G(n,p_{\rm
I})=G\Big)\PP\Big(G(n,p_{\rm I})=G\Big)
\end{equation}
 and
\begin{equation}\label{ske2}\PP(\mathcal{A}|G(n,p_{\rm I})=G)=
\PP\left(\bigcup_{\chi}\mathcal{A}_\chi\Big|G(n,p_{\rm I})=G\right) \le
2^{n^2p_1}\max_\chi\PP(\mathcal{A}_\chi|G(n,p_{\rm I})=G).
\end{equation}
By Claim \ref{Gamma} and Chernoff's
inequality (see, e.g.,~\cite{JLR}*{ineq.~(2.5)})
\begin{multline}\label{eq:R1}
  \PP(\neg\mathcal{B})\le \PP(\neg\mathcal{E})+\PP\Big(|G(n,p_{\rm I})|>n^2p_{\rm
I}\Big)\\
\le \exp\left(-\frac{\delta_{\rm I}^2}{16^{k^2}}\binom{\rho n}{2}p_{\rm I}+n+2k^2\right)+
  \exp\left(-\frac{1}{3}\binom{n}{2}p_{\rm I}\right)\\
    \leq
    \exp\left(-\frac{\delta_{\rm I}^2}{16^{k^2}}\binom{\rho n}{2}p_{\rm I}+n+2k^2+1\right)=:\qI\,.
\end{multline}

To complete the proof of Theorem \ref{rg} it is thus crucial to find an upper  bound on
\newline $\PP(\mathcal{A}_\chi|G(n,p_{\rm I})=G)$ which substantially beats the factor $2^{n^2p_1}$.

\begin{claim}\label{IIround}
For every $G\in \mathcal{B}$ and every two-coloring $\chi$ of $G$,
$$\PP(\mathcal{A}_\chi|G(n,p_{\rm I})=G)\le \exp\left(-\frac{\delta_{\rm II}^2\gamma}{9} n^2 p_{\rm II}\right)\,.$$
\end{claim}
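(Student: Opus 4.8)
Fix a graph $G\in\cB$ and a two-colouring $\chi$ of $G$, condition on $G(n,p_{\rm I})=G$, and work only with the independent random graph $G(n,p_{\rm II})$, so that $G(n,p)=G\cup G(n,p_{\rm II})$. Since $G\in\cB\subseteq\cE$, the auxiliary graph $\Gamma_\chi$ is $(\rho,d)$-dense. By the definition~\eqref{eq:ell} of a $\chi$-rich pair, every edge of $\Gamma_\chi$ closes at least $\ell$ monochromatic copies of $F_i$ in $G$, hence at least $\ell/2$ of one fixed colour; call the edge \emph{pink} if that colour can be taken to be red and \emph{azure} otherwise, so that every edge of $\Gamma_\chi$ gets a colour. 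Applying Corollary~\ref{mon} to this two-coloured $(\rho,d)$-dense graph --- its hypotheses $n\ge(4/d)^{2k}$ and $\rho\le(d/4)^{2k}$ holding by $n\ge n_{i+1}$ and the choice of $\rho$ in~\eqref{eq:dgd} --- yields at least $\gamma n^k$ monochromatic copies of $F_{i+1}$ in $\Gamma_\chi$; at least half carry the same colour, and after interchanging the names of the two colours of $\chi$ if necessary we may assume it is pink. Let $\cP$ be this family, so $|\cP|\ge\tfrac12\gamma n^k$ and every $P\in\cP$ has all its $i+1$ edges pink.

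Next I would show that few members of $\cP$ are destroyed in the second round, even after a small additional deletion. Let $\cQ$ be the increasing family of edge sets $E$ on $[n]$ for which at least $\tfrac12|\cP|\big((1-\delta_{\rm II})p_{\rm II}\big)^{i+1}$ members of $\cP$ have all their edges in $E$, and let $Y$ be the number of members of $\cP$ whose edges all lie in $G(n,(1-\delta_{\rm II})p_{\rm II})$. Then $\EE Y\ge\tfrac14\gamma n^kp_{\rm II}^{i+1}$ (as $\delta_{\rm II}$ is tiny), and $\PP\big(E(G(n,(1-\delta_{\rm II})p_{\rm II}))\notin\cQ\big)\le\PP\big(Y\le\tfrac12\EE Y\big)$, which by Janson's inequality (cf.~\cite{JLR}) is at most $\exp\!\big(-(\EE Y)^2/(8\overline{\Delta})\big)$, where $\overline{\Delta}\le\EE Y+\sum_H n^{2k-v_H}p_{\rm II}^{\,2(i+1)-e_H}$, the sum running over the (at most $2^{O(k^2)}$) overlap patterns given by a subgraph $H\subseteq F_{i+1}$ with $e_H\ge1$. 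For each such $H$,
\[
    \frac{(\EE Y)^2}{\,n^{2k-v_H}p_{\rm II}^{\,2(i+1)-e_H}\,}
    \ \ge\ \frac{\gamma^2}{16}\,n^2p_{\rm II}\cdot n^{v_H-2}p_{\rm II}^{\,e_H-1}
    \ \ge\ \frac{\gamma^2}{16}\,n^2p_{\rm II}\,,
\]
the last step by~\eqref{npCH} (using $p_{\rm II}\ge p/2$ and $C_{i+1}\ge2$); summing over $H$ gives $(\EE Y)^2/\overline{\Delta}\ge\gamma^2n^2p_{\rm II}/2^{O(k^2)}$, which for $k\ge3$ is at least $c\binom n2p_{\rm II}$ with $c=\gamma^2/4^{k^2}$. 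Since $\delta_{\rm II}=c^2/9$ by~\eqref{eq:dgd} and $\binom n2p_{\rm II}\ge72/\delta_{\rm II}^2$ for $n\ge n_{i+1}$, Proposition~\ref{del} applies with this $c$, this $\delta_{\rm II}$, $N=\binom n2$ and $p_{\rm II}$: with $h:=\tfrac{\delta_{\rm II}}2\binom n2p_{\rm II}$, except on an event $\neg\cD$ of probability at most $\exp\!\big(-\tfrac{\delta_{\rm II}^2}{9}\binom n2p_{\rm II}\big)\le\exp\!\big(-\tfrac{\delta_{\rm II}^2\gamma}{9}n^2p_{\rm II}\big)$ the graph $G(n,p_{\rm II})$ has the property that deleting any at most $h$ of its edges still leaves at least $\tfrac18\gamma n^kp_{\rm II}^{i+1}$ members of $\cP$.

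It then remains to run the win--win argument to conclude $\cA_\chi\subseteq\neg\cD$. Assume $\cD$ holds and let $\bar\chi$ be an extension of $\chi$ to $G(n,p)$; let $R$ be the set of pink edges of $\Gamma_\chi$ that lie in $G(n,p_{\rm II})$ and are coloured red by $\bar\chi$. If $|R|>h$, then each $e\in R$ together with any of the (at least $\ell/2$) red copies of $F_i$ in $G$ that it closes forms a red copy of $F_{i+1}$ in $G(n,p)$ --- the $F_i$-copy lies in $G$ and is red under $\chi\subseteq\bar\chi$, and $e\in G(n,p_{\rm II})$ is red under $\bar\chi$ --- and since each such $F_{i+1}$-copy is produced at most $i+1\le k^2$ times, this yields more than $\tfrac{h\ell}{2k^2}=\tfrac{\delta_{\rm II}\ell}{4k^2}\binom n2p_{\rm II}$ monochromatic copies of $F_{i+1}$, which by~\eqref{eq:ell}, $p_{\rm I}=\alpha p_{\rm II}$, $\mu_{F_{i+1}}\le2^{i+1}n^kp_{\rm II}^{i+1}$ (from~\eqref{miu} and~\eqref{eq:ps}) and the recursion~\eqref{rec} is at least $a_{i+1}\mu_{F_{i+1}}$. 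If instead $|R|\le h$, then deleting the edges of $R$ from $G(n,p_{\rm II})$ leaves, by $\cD$, at least $\tfrac18\gamma n^kp_{\rm II}^{i+1}$ members $P\in\cP$; for each such $P$ all $i+1$ edges are pink, lie in $G(n,p_{\rm II})$ and avoid $R$, hence are coloured blue by $\bar\chi$, so $P$ is a blue copy of $F_{i+1}$ in $G(n,p)$ --- and $\tfrac18\gamma n^kp_{\rm II}^{i+1}\ge a_{i+1}\mu_{F_{i+1}}$ again by~\eqref{rec} and~\eqref{miu}. In either case $\bar\chi$ leaves at least $a_{i+1}\mu_{F_{i+1}}$ monochromatic copies of $F_{i+1}$, so $\cA_\chi$ fails; hence $\PP\big(\cA_\chi\mid G(n,p_{\rm I})=G\big)\le\PP(\neg\cD)\le\exp\!\big(-\tfrac{\delta_{\rm II}^2\gamma}{9}n^2p_{\rm II}\big)$.

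The win--win logic is short; the real work --- and the main obstacle --- is the constant bookkeeping, i.e.\ checking that the parameters defined by~\eqref{rec} are small or large enough for the three inequalities above: that Janson's exponent clears the threshold $c\binom n2p_{\rm II}$ (and that $\binom n2p_{\rm II}\ge72/\delta_{\rm II}^2$ and the hypotheses of Corollary~\ref{mon} hold), that $\tfrac{\delta_{\rm II}\ell}{4k^2}\binom n2p_{\rm II}\ge a_{i+1}\mu_{F_{i+1}}$ in the first case, and that $\tfrac18\gamma n^kp_{\rm II}^{i+1}\ge a_{i+1}\mu_{F_{i+1}}$ in the second. Via the estimates~\eqref{eq:dgdr}, \eqref{npC} and~\eqref{npCH} these reduce to comparisons of powers of $a_i$ and of $2$; the delicate one is the first-case inequality, where one uses that $109k^4\,i<55k^6$ throughout the range $1\le i\le\binom k2-1$ to keep the power of $2$ on the right-hand side of $\alpha^i\ge(a_i^{36k^2}/2^{109k^4})^i$ under control. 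A minor point to dispatch is that the pairs underlying $\Gamma_\chi$ may be assumed to be non-edges of $G$ (or one simply notes that a pair already in $G$ retains its $\chi$-colour under $\bar\chi$, so nothing in the argument changes).
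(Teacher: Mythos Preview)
Your proof is correct and follows essentially the same route as the paper: two-colour $\Gamma_\chi$ pink/azure, apply Corollary~\ref{mon} to obtain the family $\cP$ of pink copies of $F_{i+1}$, feed Janson's inequality into Proposition~\ref{del} to make the supply of $\cP$-copies in the second round robust against deleting $h$ edges, and finish with the identical win--win dichotomy on the number of pink edges of $G(n,p_{\rm II})$ that $\bar\chi$ colours red. The one genuine (and harmless) difference is that you invoke Proposition~\ref{del} with the full ground set $N=\binom{n}{2}$, whereas the paper restricts to $N=e(\Gamma_\chi^{\rm pink})$ and only afterwards uses $e(\Gamma_\chi^{\rm pink})\ge\gamma n^2$; your choice gives the slightly stronger error bound $\exp\big(-\tfrac{\delta_{\rm II}^2}{9}\binom{n}{2}p_{\rm II}\big)$ directly and a larger $h$, which only helps in the red case. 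Your closing remarks about the constant bookkeeping (including the observation that $109k^4\cdot i<55k^6$ controls the $\alpha^i$ term) and about edges of $\Gamma_\chi$ that already lie in $G$ are both to the point and match what the paper does implicitly.
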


The edges of $\Gamma_{\chi}$ are naturally two-colored according to the majority color among the
monochromatic copies of $F_i$ attached to them. We color an edge of $\Gamma_\chi$ \emph{pink} if it
closes at least $\ell/2$ red copies of $F_i$ and we color it \emph{azure} otherwise. Subsequently,
we apply Corollary~\ref{mon} to $\Gamma_{\chi}$ for $F_{i+1}$ and $d$ (chosen in~\eqref{eq:dgd}).
Note that in~\eqref{eq:dgd} we chose $\rho$ to facilitate  such an application. Moreover, the
required lower bound on $n$ is equivalent to $\rho n\geq 1$ and this follows from~\eqref{eq:rhon}.
Hence, by Corollary~\ref{mon} and the choice of $\gamma$ in~\eqref{eq:dgd}, we may assume without
loss of generality, that there are at least $\gamma n^k/2$ pink  copies of $F_{i+1}$ in
$\Gamma_\chi$. In particular, all these copies of $F_{i+1}$ consist entirely of edges closing each
at least $\ell/2$ red copies of $F_i$ (from the first round).
 Let us denote by ${\mathcal F}_\chi$ the family of these copies of~$F_{i+1}$, and let
 $\Gamma_\chi^{\rm pink}$ be the subgraph of $\Gamma_\chi$ containing the pink edges. Since
 every edge may belong to at most $n^{k-2}$ copies of $F_{i+1}$, we have
 \begin{equation}\label{eq:pink}
    e(\Gamma_\chi^{\rm pink})\ge\frac{(i+1)\cdot |{\mathcal F}_\chi|}{n^{k-2}}\ge\frac{(i+1)\cdot \gamma n^k/2}{n^{k-2}}
    \ge\gamma n^2\,.
 \end{equation}

 In the proof of Claim \ref{IIround} we intend to use again
 Proposition \ref{del}, this time with $\Gamma=\Gamma_\chi^{\rm pink}$ and
 $\mathcal Q$ -- the property of containing  at least
 \begin{equation}\label{eq:Q2}
    \frac{\gamma}{2^{k^2}} n^kp_{\rm II}^{i+1}
 \end{equation}
 copies of $F_{i+1}$ belonging to $\mathcal{F}_{\chi}$. For this, however, we need the following fact.

 \begin{fact}\label{Janson} With $\delta_{\rm II}$ chosen in~\eqref{eq:dgd} we have
$$
    \PP\big((\Gamma_\chi^{\rm pink})_{(1-\delta_{\rm II})p_{\rm II}}\not\in\mathcal Q\big)
    \le \exp\left(-\frac{\gamma^2}{4^{k^2}}e(\Gamma_\chi^{\rm pink})p_{\rm II}\right)\,.$$
 \end{fact}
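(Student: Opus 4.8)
The plan is to apply Janson's inequality to the copies of $F_{i+1}$ lying in the family $\mathcal{F}_\chi$, viewed as subgraphs of the fixed graph $\Gamma_\chi^{\rm pink}$ whose edges are retained independently with probability $(1-\delta_{\rm II})p_{\rm II}$. First I would set up the standard parameters: let $\Phi$ be the random variable counting the copies of $F_{i+1}\in\mathcal{F}_\chi$ surviving in $(\Gamma_\chi^{\rm pink})_{(1-\delta_{\rm II})p_{\rm II}}$, let $\mu^{*}=\EE\Phi$, and let $\Delta^{*}=\sum_{A\cap B\neq\emptyset} \Pr(\text{both } A,B \text{ survive})$, where the sum runs over ordered pairs of copies in $\mathcal{F}_\chi$ that share at least one edge. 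Janson's inequality then gives $\Pr(\Phi=0)\le\exp(-\mu^{*}+\Delta^{*}/2)$, and more generally $\Pr(\Phi\le\mu^{*}/2)\le\exp(-c\mu^{*})$ provided $\Delta^{*}=O(\mu^{*})$; I would quote this in the form that bounds $\Pr(\Phi < \mu^{*}/2)$ since $\mathcal{Q}$ is the event of having at least $\gamma 2^{-k^2}n^kp_{\rm II}^{i+1}$ such copies, and one checks $\mu^{*}/2$ exceeds this threshold using $|\mathcal{F}_\chi|\ge\gamma n^k/2$ together with $(1-\delta_{\rm II})^{i+1}\ge 1/2$ (valid because $\delta_{\rm II}$ is tiny by \eqref{eq:dgd}).

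The two quantitative inputs are a lower bound on $\mu^{*}$ and an upper bound on $\Delta^{*}$. For $\mu^{*}$: each copy of $F_{i+1}$ in $\mathcal F_\chi$ has exactly $i+1$ edges, so it survives with probability $((1-\delta_{\rm II})p_{\rm II})^{i+1}$, giving $\mu^{*} = |\mathcal{F}_\chi|\cdot((1-\delta_{\rm II})p_{\rm II})^{i+1}\ge \tfrac{\gamma}{2}n^k\cdot\tfrac12 p_{\rm II}^{i+1}$, and since $e(\Gamma_\chi^{\rm pink})\le n^2$ this is comfortably $\ge\frac{\gamma}{4^{k^2}}e(\Gamma_\chi^{\rm pink})p_{\rm II}$-ish after we absorb constants — but the real point is to keep $\mu^{*}$ of order $n^kp_{\rm II}^{i+1}$. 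For $\Delta^{*}$: two copies $A,B\in\mathcal F_\chi$ sharing a subgraph $H$ with $e_H\ge1$ edges contribute at most $p_{\rm II}^{2(i+1)-e_H}$ each, and the number of such ordered pairs with intersection pattern $H$ is at most $n^{2v_{F_{i+1}}-v_H}=n^{2k-v_H}$. So $\Delta^{*}\le\sum_{H} n^{2k-v_H}p_{\rm II}^{2(i+1)-e_H} = (\mu^{*}/p_{\rm II}^{\,0}\text{-type term})\cdot\sum_H n^{-v_H}p_{\rm II}^{-e_H}\cdot(\text{const})$, and I would show $\Delta^{*}\le \tfrac12\mu^{*}\cdot\max_H n^{2-v_H}p_{\rm II}^{1-e_H}\cdot(\text{poly}(k))$; the factor $n^{2-v_H}p_{\rm II}^{1-e_H}$ is controlled since $H$ is a subgraph of $F_{i+1}$ with $v_H\ge 3$ edges on the shared part, so \eqref{npCH} applies and forces $n^{v_H-2}p_{\rm II}^{e_H-1}$ to be large (using $p_{\rm II}\ge p/2$ and $C_{i+1}$ huge). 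The only subtlety is the case $v_H=2$, $e_H=1$ (the copies share just the single edge $f$-type edge): there $\Delta^{*}$ picks up a term $n^{2k-2}p_{\rm II}^{2i+1}$, which must be compared against $\mu^{*}\sim n^kp_{\rm II}^{i+1}$, i.e. one needs $n^{k-2}p_{\rm II}^{i}=\Theta(n^{v_{F_i}-2}p_{\rm II}^{e_{F_i}})$ to be large, and this again follows from $m_{F_{i+1}}\ge m_{F_i}\ge d_{F_i}$ via \eqref{npCH}.

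Assembling, with $\Delta^{*}\le\tfrac12\mu^{*}$ (after the constants are chosen so that the $\max_H$ factors and $\mathrm{poly}(k)$ are dominated — here the generous powers of $C_{i+1}$ in \eqref{rec} do the work), Janson's inequality yields $\Pr(\Phi<\mu^{*}/2)\le\exp(-\mu^{*}/8)\le\exp\bigl(-\tfrac{\gamma}{4^{k^2}}\,e(\Gamma_\chi^{\rm pink})\,p_{\rm II}\bigr)$, the last step using $\mu^{*}\ge c\,\gamma\, n^k p_{\rm II}^{i+1}$ and $e(\Gamma_\chi^{\rm pink})\le n^2\le n^k p_{\rm II}^{i+1}\cdot(\text{something}\ge 1)$ — more precisely one writes $\mu^{*}\ge \tfrac{\gamma}{4}n^kp_{\rm II}^{i+1}$ and $n^{k-2}p_{\rm II}^{i+1}=n^{v_{F_{i+1}}-2}p_{\rm II}^{e_{F_{i+1}}-1}\cdot p_{\rm II}/p_{\rm II}\ge C_{i+1}^{e_{F_{i+1}}-1}\cdot(\ldots)$, so that $\mu^{*}\ge \tfrac{\gamma}{4}\,n^2p_{\rm II}\cdot n^{k-2}p_{\rm II}^{i}\ge \tfrac{\gamma}{4^{k^2}}e(\Gamma_\chi^{\rm pink})p_{\rm II}\cdot(\text{large factor})$, finishing the bound. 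The main obstacle I expect is bookkeeping the $\Delta^{*}$ sum over all intersection patterns $H\subseteq F_{i+1}$ and verifying in each case — especially the minimal $v_H=2$ case and the near-maximal cases — that the relevant power $n^{v_H-2}p_{\rm II}^{e_H-1}$ is large enough via \eqref{npCH}, so that $\Delta^{*}$ stays below $\mu^{*}$; everything else is a routine chase through the constants in \eqref{eq:dgd}, \eqref{eq:dgdr}, and \eqref{rec}.
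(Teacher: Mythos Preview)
Your overall setup is correct, but the central quantitative claim fails: you assert that $\Delta^{*}\le\tfrac12\mu^{*}$ and then invoke the additive form of Janson's inequality to get $\Pr(\Phi<\mu^{*}/2)\le\exp(-\mu^{*}/8)$. In fact $\Delta^{*}$ is \emph{much larger} than $\mu^{*}$ here. The dominant contribution to $\Delta^{*}$ comes from the single-edge overlap ($v_H=2$, $e_H=1$), giving a term of order $n^{2k-2}p_{\rm II}^{2i+1}$; dividing by $\mu^{*}\sim\gamma n^{k}p_{\rm II}^{i+1}$ yields $\Delta^{*}/\mu^{*}\sim n^{k-2}p_{\rm II}^{i}/\gamma$. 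By~\eqref{npCH} applied with $H=F_{i+1}$ one has $n^{k-2}p^{i}\ge C_{i+1}^{i}$, so this ratio is enormous, not $\le\tfrac12$. You actually spotted the relevant quantity (``one needs $n^{k-2}p_{\rm II}^{i}$ to be large'') but read the implication backwards: its being large is precisely what makes $\Delta^{*}\gg\mu^{*}$ and kills the additive form of Janson.

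The paper avoids this by using the lower-tail Janson inequality in the form
\[
\Pr\!\left(Z\le\tfrac12\E Z\right)\le\exp\!\left(-\frac{(\E Z)^{2}}{8\bar\Delta}\right),
\]
which is effective regardless of the size of $\bar\Delta/\E Z$. With $\E Z\ge 2^{-\binom{k}{2}-1}\gamma n^{k}p_{\rm II}^{i+1}$ and $\bar\Delta\le 2^{\binom{k}{2}}n^{2k-2}p_{\rm II}^{2i+1}$ (the latter obtained exactly as you outlined, bounding each overlap term by showing $n^{v(\tF)}p_{\rm II}^{e(\tF)}\ge n^{2}p_{\rm II}$ via~\eqref{npCH}), the exponent becomes of order $\gamma^{2}n^{2}p_{\rm II}$, and then $e(\Gamma_\chi^{\rm pink})\le n^{2}$ gives the stated bound. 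Note the $\gamma^{2}$ rather than $\gamma$ in the final exponent --- this is the signature of the $(\E Z)^{2}/\bar\Delta$ form, and your closing line with a single $\gamma$ is another symptom of having used the wrong version.
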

 \begin{proof} Consider a random variable $Z$ counting the number
 of copies $F_{i+1}$ belonging to $\mathcal{F}_{\chi}$
 which are subgraphs of $G(n,(1-\delta_{\rm II})p_{\rm II})$. We have
 \begin{equation}\label{eq:expZ}
 \EE Z=|{\mathcal F}_\chi|((1-\delta_{\rm II})p_{\rm II})^{i+1}\ge\frac12\gamma n^k((1-\delta_{\rm II})p_{\rm II})^{i+1}
 \geq
 \frac{1}{2}\cdot\frac{1}{2^{\binom{k}{2}}}\gamma n^kp_{\rm II}^{i+1}
 \,,
 \end{equation}
 where we used the bound $\delta_{\rm II}\leq 1/2$.

By Janson's inequality (see, e.g., \cite{JLR}*{Theorem~2.14}),
 $$
    \PP\big((\Gamma_\chi^{\rm pink})_{(1-\delta_{\rm II})p_{\rm II}}\not\in\mathcal Q\big)
    \le
    \Pr\left(Z\le\frac12\E Z\right)
    \le
    \exp\left(-\frac{(\E Z)^2}{8\bar\Delta}\right),$$
where $\bar\Delta$ is defined by 
$$
\bar\Delta
=
\sum_{F'\in \mathcal F_{\chi}}\sum_{F''\in\mathcal F_\chi}
\Pr(F'\cup F''\subseteq G(n,(1-\delta_{\rm II})p_{\rm II}))\,,
$$
with the double sum ranging over all pairs  $(F',F'')\in\mathcal F_\chi\times\mathcal F_\chi$ such that $E(F')\cap E(F'')\neq\emptyset$.
The quantity $\bar\Delta$ can be bounded from above by
\begin{equation}\label{eq:bDelta}
    \bar\Delta\leq \sum_{\tF\subseteq F_{i+1}}n^{2k-v(\tF)}p_{\rm II}^{2(i+1)-e(\tF)}\,,
\end{equation}
where the sum is taken over all  subgraphs $\tF$ of $F_{i+1}$ with at least one edge. If
$e(\tF)=1$  then
\begin{equation}\label{eq:J1}
    n^{v(\tF)}p_{\rm II}^{e(\tF)}= n^{v(\tF)}p_{\rm II}\geq n^2p_{\rm II}\,.
\end{equation}
Otherwise,
\begin{equation}\label{eq:J2}
   n^{v(\tF)}p_{\rm II}^{e(\tF)}
    \geq
    \frac{n^{v(\tF)}p^{e(\tF)}}{2^{e(\tF)}}
  \overset{\eqref{npCH}}{\ge}
    \frac{n^2pC_{i+1}^{e(\tF)-1}}{2^{e(\tF)}}
    \geq
     n^2p
    \overset{\eqref{eq:ps}}{\ge}
    n^2p_{\rm II}\,,
\end{equation}
where we also used the fact that $C_{i+1}\geq 4$ (see~\eqref{rec}). Combining~\eqref{eq:bDelta} with the bounds~\eqref{eq:J1} and \eqref{eq:J2} yields
\[
    \bar\Delta
    \leq 2^{i+1}n^{2k-2}p_{\rm II}^{2i+1}
    \leq 2^{\binom{k}{2}}n^{2k-2}p_{\rm II}^{2i+1}\,.
\]
Finally, plugging this estimate for $\bar\Delta$ and~\eqref{eq:expZ} into Janson's inequality we
obtain
$$
    \Pr\big((\Gamma_\chi^{\rm pink})_{(1-\delta_{\rm II})p_{\rm II}}\not\in\mathcal Q\big)
    \le
    \exp\left(-\frac{\gamma^2n^2 p_{\rm II}}{32\cdot2^{2\binom{k}{2}}\cdot2^{\binom{k}{2}}}\right)
    \le
    \exp\left(-\frac{\gamma^2}{4^{k^2}}e(\Gamma_\chi^{\rm pink})p_{\rm II}\right)\,.
$$
\end{proof}

\begin{proof}[Proof of Claim \ref{IIround}]
We shall apply Proposition~\ref{del} with $c=\gamma^2/4^{k^2}$, $\delta_{\rm
II}=\gamma^4/(9\cdot 16^{k^2})$ (see~\eqref{eq:dgd}),  $N=e(\Gamma_\chi^{\rm pink})$, and
$p_{\rm II}$. Therefore, first we have to verify that $e(\Gamma_\chi^{\rm pink})p_{\rm II}\geq
72/\delta_{\rm II}^2$. Indeed,
\[
    e(\Gamma_\chi^{\rm pink})\cdot p_{\rm II}
    \overset{(\ref{eq:ps},\,\ref{eq:pink})}{\geq} \gamma n^2 \cdot \frac{p}{2}
\overset{(\ref{npC})}{\geq}
    \frac{\gamma}{2}nC_{i+1}
    \overset{\eqref{rec}}{\ge}
    \frac{\gamma}{2}\cdot \frac{2^{122k^4}}{a_i^{37k^3}}
    \overset{\eqref{eq:dgdr}}{\ge}
    \frac{72\cdot 81\cdot 16^{2k^2}}{\gamma^8}=\frac{72}{\delta_{\rm II}^2}\,.
\]
Consequently, by Proposition \ref{del}, we conclude that with probability at least
\begin{equation}\label{eq:error2}
1-\exp\left(- \frac{\delta_{\rm II}^2}{9} e(\Gamma_\chi^{\rm pink})p_{\rm II} \right)
\overset{\eqref{eq:pink}}{\ge}
1-\exp\left(-\frac{\delta_{\rm II}^2\gamma}{9} n^2 p_{\rm II}\right)\,,
\end{equation}
the random graph $(\Gamma_\chi^{\rm pink})_{p_{\rm II}}$ has the property that for every subgraph
$\Gamma'\subseteq (\Gamma_\chi^{\rm pink})_{p_{\rm II}}$
with
\begin{equation}\label{eq:h2}
    \big|E((\Gamma_\chi^{\rm pink})_{p_{\rm II}})\setminus E(\Gamma')\big|
    \leq
    \frac{\delta_{\rm II}\gamma}{2} n^2p_{\rm II}=:h_{\rm II}
\end{equation}
we have $\Gamma'\in\cQ$, that is, $\Gamma'$ contains at least
    $\frac{\gamma}{2^{k^2}} n^kp_{\rm II}^{i+1}$
 copies of $F_{i+1}$ belonging to $\mathcal{F}_{\chi}$ (see~\eqref{eq:Q2}).

Consider now an extension $\bchi$ of the coloring $\chi$ from $G(n,p_{\rm I})$ to $G(n,p)$. If in
the coloring~$\bchi$ fewer than $h_{\rm II}$ edges of $(\Gamma_\chi^{\rm pink})_{p_{\rm II}}$ are
colored red, then, by the above consequence of Proposition \ref{del}, the blue part of
$(\Gamma_\chi^{\rm pink})_{p_{\rm II}}$ contains at least
\[
    \frac{\gamma}{2^{k^2}} n^kp_{\rm II}^{i+1}
    \overset{\eqref{eq:ps}}{\geq}
    \frac{\gamma}{4^{k^2}} n^kp^{i+1}
\]
copies of $F_{i+1}$. If, on the other hand, more than $h_{\rm II}$ edges of
$(\Gamma_\chi^{pink})_{p_{\rm II}}$ are colored red, then, by the definition of a pink edge, noting
that $i\le k^2/2$, at least
\begin{align*}
    h_{\rm II}\times \frac{\ell}{2}\times \frac1{i+1}
    &\overset{(\ref{eq:ell},\,\ref{eq:h2})}{\geq}
    \frac{\delta_{\rm II}\gamma}{2} n^2p_{\rm II}\times \frac{a_i}{4^{k^2}k^2}(\rho n)^{k-2}p_{\rm I}^i\\
    &\overset{\hspace{6pt}\eqref{eq:ps}\hspace{6pt}}{\geq}
    \frac{\delta_{\rm II}\gamma}{4} n^2p\times \frac{a_i\rho^k}{4^{k^2}k^2}\left(\frac\alpha2\right)^in^{k-2}p^i\\
    &\overset{\phantom{(\ref{eq:ell},\,\ref{eq:h2})}}{\geq}
    \frac{\delta_{\rm II}\gamma a_i\rho^k\alpha^{k^2/2}}{16^{k^2}}n^kp^{i+1}
\end{align*}
red copies of $F_{i+1}$ arise. Owing to~\eqref{eq:dgd},~\eqref{eq:dgdr}, and the choice of
$a_{i+1}$ in~\eqref{rec} we have
\[
    \frac{\gamma}{4^{k^2}}
    \overset{\eqref{eq:dgdr}}{\ge}
    \frac{a_i^{4k^2}}{2^{13k^4+2k^2}}
    \overset{\eqref{rec}}{\ge}
    a_{i+1}
\]
and
\[
    \frac{\delta_{\rm II}\gamma a_i\rho^k\alpha^{k^2/2}}{16^{k^2}}
    \overset{\eqref{eq:dgd}}{=}
    \frac{\gamma^5\rho^k\alpha^{k^2/2}}{9\cdot 2^{8k^2}} a_i
    \overset{(\ref{eq:dgdr})}{\ge}
    \frac{a_i^{18k^4+24k^2}}{2^{55k^6}}
    \overset{\eqref{rec}}{\ge}
    a_{i+1}\,.
\]
Therefore, we have shown that with  probability as in \eqref{eq:error2}, indeed any extension
$\bchi$ of $\chi$ yields at least
$$\min\left(\frac{\gamma}{4^{k^2}}\,,\,\frac{\delta_{\rm II}\gamma a_i\rho^k\alpha^{k^2/2}}
{16^{k^2}}n^kp^{i+1}\right)\ge a_{i+1}n^kp^{i+1}\overset{\eqref{miu}}{\ge} a_{i+1}\mu_{F_{i+1}}$$
monochromatic copies of $F_{i+1}$.
\end{proof}

\subsubsection*{The final touch.}
To finish the proof of Theorem \ref{rg} it is left to verify that indeed 
\[
	\PP(\mathcal A)\le\exp(-b_{i+1}\tbinom n2p\,.
\] 
The error probability of the first round is (see \eqref{eq:R1})
$$\PP(\neg\mathcal B)\le q_{\rm I}.$$
Turning to the second round, by Claim \ref{IIround} and \eqref{ske2}, for any $G\in\mathcal B$,

\begin{equation}\label{eq:R2}
   \PP(\mathcal{A}|G(n,p_{\rm I})=G)\le 2^{n^2p_{\rm I}}\cdot\exp\left(-\frac{\delta_{\rm II}^2\gamma}{9} n^2 p_{\rm II}\right)
    \leq
    \exp\left(-\frac{\delta_{\rm II}^2\gamma}{9}n^2 p_{\rm II}+n^2 p_{\rm I}\right)=:\qII\,,
\end{equation}
and, consequently, by \eqref{ske1},
$$\PP(\mathcal A)\le\qI+\qII.$$
Below we show (see Fact \ref{qq}) that $\qI$ and $\qII$ are each upper bounded by
$\exp(-b_{i+1}n^2p)$. Consequently,
\[
    \PP(\mathcal A)\leq 2\exp(-b_{i+1}n^2p)\le\exp(1-b_{i+1}n^2p)\le\exp(-\tfrac{b_{i+1}}{2}n^2p)\le\exp(-b_{i+1}\tbinom n2p)
\]
because
\[
    \frac{b_{i+1}}{2}n^2p
    \overset{\eqref{npC}}{\geq}
    \frac{b_{i+1}}2C_{i+1}n_{i+1}
    \overset{\eqref{rec}}{\ge}
    C_in_{i+1}
    \ge
    1\,.
\]

\begin{fact}\label{qq}
We have $\max(\qI,\qII)\le\exp(-b_{i+1}n^2p)$.
\end{fact}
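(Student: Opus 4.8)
The plan is to treat $\qI$ and $\qII$ separately. Each is of the shape $\exp(-E)$ for an explicit expression $E$, so it suffices to verify $E\ge b_{i+1}n^2p$ in both cases; after inserting the definitions this becomes a purely numerical comparison among $a_i,b_i,C_i$ and powers of $2$, which I would settle using the lower bounds in~\eqref{eq:dgdr}, the recursion~\eqref{rec}, and the two global estimates $np\ge C_{i+1}$ from~\eqref{npC} and $n\ge n_{i+1}\ge 2^{14k^3}$.

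\emph{Bounding $\qII$.} This is where the precise choice $\alpha=\delta_{\rm II}^2\gamma/36$ from~\eqref{eq:dgd} pays off: combined with $p_{\rm I}=\alpha p_{\rm II}$ from~\eqref{eq:p1p2} it gives $n^2p_{\rm I}=\alpha n^2p_{\rm II}=\tfrac14\cdot\tfrac{\delta_{\rm II}^2\gamma}{9}\,n^2p_{\rm II}$, so the exponent of $\qII$ in~\eqref{eq:R2} equals $-\tfrac{\delta_{\rm II}^2\gamma}{9}n^2p_{\rm II}+n^2p_{\rm I}=-\tfrac34\cdot\tfrac{\delta_{\rm II}^2\gamma}{9}n^2p_{\rm II}=-\tfrac{\delta_{\rm II}^2\gamma}{12}n^2p_{\rm II}$, and by $p_{\rm II}\ge p/2$ (see~\eqref{eq:ps}) this is at most $-\tfrac{\delta_{\rm II}^2\gamma}{24}n^2p$. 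It then remains to check $\tfrac{\delta_{\rm II}^2\gamma}{24}\ge b_{i+1}$. Writing $\delta_{\rm II}^2\gamma=\gamma^9/(81\cdot 2^{8k^2})$ and using $\gamma\ge a_i^{4k^2}/2^{13k^4}$ from~\eqref{eq:dgdr}, $b_{i+1}=a_i^{37k^2}b_i^4/2^{118k^4}$ from~\eqref{rec}, and $b_i<1$, this reduces to $a_i^{-k^2}\cdot 2^{k^4-8k^2}\ge 2^{11}$, which (since $a_i\le\tfrac12$ forces $a_i^{-k^2}\ge 2^{k^2}$, so the left side is $\ge 2^{k^4-7k^2}\ge 2^{18}$) holds for every $k\ge3$.

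\emph{Bounding $\qI$.} By~\eqref{eq:R1} we have $E_{\rm I}=\tfrac{\delta_{\rm I}^2}{16^{k^2}}\binom{\rho n}{2}p_{\rm I}-n-2k^2-1$. I would first use $\binom{m}{2}\ge m^2/4$ for $m\ge2$ together with $p_{\rm I}\ge\alpha p/2$ (from~\eqref{eq:ps}) to get $\binom{\rho n}{2}p_{\rm I}\ge\tfrac{\rho^2\alpha}{8}n^2p$, and then absorb the additive tail: $n+2k^2+1\le 3n\le 3n^2p/C_{i+1}$ by~\eqref{npC}, while $3/C_{i+1}<b_{i+1}$ because $C_{i+1}b_{i+1}=2^{4k^4}C_i$ by~\eqref{rec}. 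Hence it suffices to prove $\tfrac{\delta_{\rm I}^2\rho^2\alpha}{8\cdot 16^{k^2}}\ge 2b_{i+1}$. Substituting $\delta_{\rm I}=b_i^2/36$, the bounds $\rho^2\ge a_i^{8k}/2^{26k^3}$ and $\alpha\ge a_i^{36k^2}/2^{109k^4}$ from~\eqref{eq:dgdr}, and $b_{i+1}=a_i^{37k^2}b_i^4/2^{118k^4}$ from~\eqref{rec}, the factor $b_i^4$ cancels and the claim turns into the elementary inequality $a_i^{8k-k^2}\cdot 2^{9k^4-26k^3-4k^2}\ge 2^{15}$; since $8k-k^2<0$ and $a_i\le\tfrac12$ give $a_i^{8k-k^2}\ge 2^{k^2-8k}$, this holds once $k$ is large enough, i.e.\ in the ``sufficiently large $k$'' regime already in force through~\eqref{eq:dgdr}.

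\emph{The main obstacle.} There is no real difficulty beyond bookkeeping: one must keep the signs and the exponents of $a_i$, $b_i$ and $2$ straight through all the substitutions, and for $\qI$ notice that the leading term $\tfrac{\delta_{\rm I}^2}{16^{k^2}}\binom{\rho n}{2}p_{\rm I}$ dominates $b_{i+1}n^2p$ with enough slack to soak up the lower-order terms $n+2k^2+1$. The one structural point, already hard-wired into~\eqref{eq:dgd}, is the identity $\alpha=\delta_{\rm II}^2\gamma/36$: it is calibrated precisely so that the factor $2^{n^2p_{\rm I}}$ lost to the union bound over colourings in~\eqref{ske2} is cancelled by a quarter of the main exponent of $\qII$.
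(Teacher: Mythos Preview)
Your proposal is correct and follows essentially the same approach as the paper: both arguments split into the two cases, reduce each to a numerical comparison via \eqref{eq:dgdr} and \eqref{rec}, and use \eqref{npC} to absorb the additive term $n+2k^2+1$. The only differences are cosmetic constants (you take $\binom{m}{2}\ge m^2/4$ and use $p_{\rm I}=\alpha p_{\rm II}$ exactly to reach $\delta_{\rm II}^2\gamma/24$, whereas the paper uses $\binom{m}{2}\ge m^2/6$ and $p_{\rm I}\le\alpha p$ to reach $\delta_{\rm II}^2\gamma/36$), and your explicit acknowledgment of the ``sufficiently large $k$'' regime matches the paper's implicit reliance on \eqref{eq:dgdr}.
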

\begin{proof} We first bound $\qI$. Since $\rho n\ge3$ (see~\eqref{eq:rhon}),
\[
\frac{\delta_{\rm I}^2}{16^{k^2}}\binom{\rho n}2p_{\rm I}\overset{\eqref{eq:ps}}{\ge}
    \frac{\delta_{\rm I}^2\rho^2\alpha}{16^{k^2}\cdot 6}n^2p
    \overset{(\ref{eq:dgdr}\,,\ref{eq:delta_I})}{\geq}
    \frac{b_i^4a_i^{36k^2+8k}}{6^5\cdot2^{109k^4+26k^3+4k^2}}np^2
    \overset{\eqref{rec}}{\ge}
    2b_{i+1}n^2p
\]
while, since $i+1\geq 2$,
\[
    n+2k^2+1\leq n+n_{i+1}\leq 2n
    \overset{\eqref{rec}}{\leq}
    b_{i+1}C_{i+1} n\overset{\eqref{npC}}{\leq} b_{i+1}n^2p\,.
\]
Consequently,
\[
    \qI\leq \exp(-2b_{i+1}n^2p+b_{i+1}n^2p)=\exp(-b_{i+1}n^2p)\,.
\]
Now we derive the same upper bound for $\qII$. Since
\[
    p_{\rm I}
    \overset{\eqref{eq:ps}}\leq\alpha p
    \overset{\eqref{eq:dgd}}=\frac{\delta_{\rm II}^2\gamma}{36} p
\]
while $p_{\rm II}\overset{\eqref{eq:ps}}\geq p/2$,
\[
    \qII
    =
    \exp\left(-\frac{\delta_{\rm II}^2\gamma}{9}n^2 p_{\rm II}+n^2 p_{\rm I}\right)
    \leq
    \exp\left(-\frac{\delta_{\rm II}^2\gamma}{36}n^2p\right)\,.
\]
Therefore, the required bound follows from
\[
    \frac{\delta_{\rm II}^2\gamma}{36}
    \overset{\eqref{eq:dgd}}{=}
    \frac{\gamma^9}{36\cdot 81\cdot 16^{2k^2}}
    \overset{\eqref{eq:dgdr}}{\ge}
    \frac{a_i^{36k^2}}{2^{118^{k^4}}}
    \overset{\eqref{rec}}{\ge}
    b_{i+1}\,.\qedhere
\]
\end{proof}
This concludes the proof of the inductive step, i.e., the proof of Theorem~\ref{rg} for $F_{i+1}$, given it is true for $F_i$, $i=1,\dots,\binom k2-1$.
The proof of Theorem \ref{rg} is thus completed.

\section{Proof of Corolary \ref{main}}\label{proofcor}

In order to deduce Corollary \ref{main} from Theorem \ref{rg}, we first need to estimate the involved parameters
$a_i,b_i,C_i,n_i$, $i=1,\dots,\tbinom k2$, defined  recursively in (\ref{rec}).

\begin{prop}\label{e^e}
There exist positive constants $c_1,c_2,c_3,c_4>0$ such that for every $k\ge3$
$$a_{K_k}\ge 2^{-k^{(c_1\cdot k^2)}}\qquad b_{K_k}\ge 2^{-k^{(c_2\cdot k^2)}}\qquad C_{K_k}\le
2^{k^{(c_3\cdot k^2)}}\qquad n_{K_k}\le 2^{k^{(c_4\cdot k^2)}}.$$
\end{prop}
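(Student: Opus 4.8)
The plan is to unwind the recurrences in~\eqref{rec} one parameter at a time, always tracking everything in terms of a single scale: the exponent of~$2$ (after writing each quantity as a power of~$2$ with a $k$-dependent exponent). Concretely, I would set $a_i=2^{-\alpha_i}$, $b_i=2^{-\beta_i}$, $C_i=2^{\gamma_i}$, and $n_i=2^{\nu_i}$, so that~\eqref{rec} becomes a system of \emph{linear} recurrences in the exponents with coefficients that are fixed powers of~$k$. The base case~\eqref{eq:L4-1} gives $\alpha_1=1$, $\beta_1=3$, $\gamma_1=0$, $\nu_1=0$, and the recursion for $\alpha$ decouples from the rest: $\alpha_{i+1}=19k^4\,\alpha_i+55k^6$. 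Since $i$ ranges only up to $\binom k2<k^2$, iterating this at most $k^2$ times yields $\alpha_{i}\le (19k^4)^{k^2}\cdot(1+55k^6)\le k^{c_1 k^2}$ for a suitable absolute constant~$c_1$; this is the bound $a_{K_k}=a_{\binom k2}\ge 2^{-k^{c_1 k^2}}$.

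Next I would feed this bound on $\alpha_i$ into the recurrences for the other three exponents, each of which has the shape ``(polynomial in~$k$)~$\times$~(previous value) plus (polynomial in~$k$ and~$\alpha_i$)''. For $\beta$: from $b_{i+1}=a_i^{37k^2}b_i^4/2^{118k^4}$ we get $\beta_{i+1}=4\beta_i+37k^2\alpha_i+118k^4$, and since $37k^2\alpha_i+118k^4\le k^{c'k^2}$ uniformly in $i\le k^2$, iterating the recursion $\beta_{i+1}\le 4\beta_i+k^{c'k^2}$ at most $k^2$ times gives $\beta_i\le 4^{k^2}\cdot k^{c'k^2}\le k^{c_2 k^2}$, hence $b_{K_k}\ge 2^{-k^{c_2 k^2}}$. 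For $C$: $C_{i+1}=2^{122k^4}b_i^{-4}a_i^{-37k^2}C_i$ gives $\gamma_{i+1}=\gamma_i+122k^4+4\beta_i+37k^2\alpha_i\le \gamma_i+k^{c'' k^2}$ (using the just-established bounds on $\alpha_i,\beta_i$), so summing at most $k^2$ terms yields $\gamma_i\le k^2\cdot k^{c''k^2}\le k^{c_3 k^2}$ and $C_{K_k}\le 2^{k^{c_3 k^2}}$. Finally $n_{i+1}=2^{14k^3}a_i^{-4k}n_i$ gives $\nu_{i+1}=\nu_i+14k^3+4k\alpha_i\le \nu_i+k^{c'''k^2}$, and the same telescoping argument gives $\nu_i\le k^{c_4 k^2}$, i.e.\ $n_{K_k}\le 2^{k^{c_4 k^2}}$.

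The only mild subtlety — and the step I would be most careful about — is making sure the phrase ``for sufficiently large $k$'' does not silently creep in, since Corollary~\ref{main} is stated for \emph{every} $k\ge3$: one should check that the constants $c_1,\dots,c_4$ can be chosen so that the inequalities $\alpha_i\le k^{c_1k^2}$ etc.\ already hold at $k=3$, which is automatic because there are only finitely many values of $(i,k)$ below any threshold and $k^{ck^2}\to\infty$. A second bookkeeping point is that all four chains of estimates are genuinely sequential: the bound on $\alpha_i$ must be in hand before bounding $\beta_i$, and both before $\gamma_i$ and $\nu_i$; as long as one proceeds in that order, each step is just ``iterate a recurrence of the form $x_{i+1}\le \lambda x_i + M$ for at most $k^2$ steps, giving $x_i\le \lambda^{k^2}(x_1+M)$'' with $\lambda$ either a fixed power of~$k$ (for $\alpha$), the constant~$4$ (for $\beta$), or~$1$ (for $\gamma$ and $\nu$). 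I would present the argument in exactly this four-block structure, stating the elementary lemma ``$x_{i+1}\le\lambda x_i+M\ \Rightarrow\ x_i\le\lambda^{i-1}x_1+M\frac{\lambda^{i-1}-1}{\lambda-1}$'' once and invoking it four times. No genuine obstacle is expected; the proof is a routine but slightly lengthy induction-free unrolling, and the main risk is an arithmetic slip in the exponents of~$k$, which does not affect the existence of the absolute constants $c_1,\dots,c_4$.
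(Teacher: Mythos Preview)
Your proposal is correct and follows essentially the same approach as the paper: take base-$2$ logarithms to turn the multiplicative recurrences~\eqref{rec} into linear ones, solve (or crudely iterate) the decoupled recurrence for $\alpha_i$ first, and then feed that bound into the recurrences for $\beta_i$, $\gamma_i$, $\nu_i$, using throughout that the number of steps is at most $\binom k2<k^2$. The paper solves the $\alpha$-recurrence in closed form and telescopes the remaining products explicitly, whereas you invoke a single ``$x_{i+1}\le\lambda x_i+M$'' lemma four times, but this is a purely cosmetic difference; your treatment of the small-$k$ issue is in fact slightly more careful than the paper's, which simply assumes $k\ge k_0$ and leaves the adjustment of constants for $3\le k<k_0$ implicit.
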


\begin{proof}  Throughout the proof we assume that $k\ge k_0$ for some sufficiently large constant~$k_0$.
Let  $x=19k^4$, $y=55k^6$, and set $\alpha_i=\log a_i$, $i=1,\dots,\binom k2$. Recall that
$a_1=\tfrac12$. The recurrence relation (\ref{rec}) becomes now
$$\alpha_i=x\alpha_{i-1}-y,$$
whose solution can be easily found as
$$\alpha_i=-x^{i-1}-y\frac{x^{i-1}-1}{x-1}$$
(note that $\alpha_1=-1$). Hence, for all $i=1,\dots,\binom k2$, and some constant $c_1>0$,
\begin{equation}\label{-a}
-\alpha_i=x^{i-1}+y\frac{x^{i-1}-1}{x-1}\le k^{c_1\cdot i}.
\end{equation}
 In particular,
$$a_{\binom k2}\ge 2^{-k^{c_1\cdot \binom k2}}\ge 2^{-k^{(c_1\cdot
k^2)}}.$$
 The recurrence relation for the $b_i$'s is more complex. With
$u=37k^2$ and $v=118k^4$, it reads  as
$$b_i=b_{i-1}^4a_{i-1}^u2^{-v}.$$
Thus, recalling that $b_1=\tfrac18$,
$$b_i8^{4^{i-1}}=\prod_{j=2}^i\left(\frac{b_j}{b_{j-1}^4}\right)^{4^{i-j}}=\prod_{j=2}^i\left(
a_j^{u}2^{-v}\right)^{4^{i-j}}.$$ Setting, $\beta_i=\log b_i$, and taking logarithms of both sides
and using \eqref{-a} we obtain, for some constant $c_2>0$,
\begin{align}
-\beta_i=3\cdot4^{i-1}+\sum_{j=2}^i4^{i-j}\left(u(-\alpha_j)+v\right)
&\le4^i+(i-1)4^{i-2}\left(u(-\alpha_i)+v\right)\nonumber\\
&\le4^i\left[1+i\left(uk^{(c_1\cdot
i)}+v\right)\right]\le k^{c_2\cdot i},\label{-b}
\end{align}
 where in the last step above we used estimates $4^i\le k^{2i}$ and $i\le k^2$.
In particular,
$$b_{\binom k2}\ge 2^{-k^{(c_2\cdot k^2)}}.$$

The recurrence relation for $C_i$ involves not only $C_{i-1}$ and $a_{i-1}$ but also $b_{i-1}$.
Nevertheless, its solution follows the steps of that for $b_i$. Indeed, we have
$$\frac{C_i}{C_{i-1}}=\frac{2^z}{b_{i-1}^4a_{i-1}^w},$$
where $z=122k^4$ and $w=37k^2$. Recalling that $C_1=1$,
$$C_i=\prod_{j=2}^i\frac{C_j}{C_{j-1}}=\prod_{j=2}^i\frac{2^z}{b_{j-1}^4a_{j-1}^w}$$
and, consequently, by \eqref{-a} and \eqref{-b}, for some constant $c_3>0$,
\begin{align*}
	\log C_i
	&\le
	(i-1)z+\sum_{j=2}^i\left(4(-\beta_j)+w(-\alpha_j)\right)\\
	&\le
	(i-1)\left(z+4(-\beta_i)+w(-\alpha_i)\right)
	\le k^2\left(z+4k^{(c_2\cdot i)}+wk^{(c_1\cdot
i)}\right)
	\le 
	k^{c_3\cdot i}.
\end{align*}
In particular,
$$C_{\binom k2}\le 2^{k^{(c_3\cdot k^2)}}.$$
Similarly, for some constant $c_4>0$,
$$n_i=\prod_{j=2}^i\frac{n_j}{n_{j-1}}=\prod_{j=2}^i\frac{2^{14k^3}}{a_{j-1}^{4k}}\le 2^{k^{(c_4\cdot i)}}$$
and, consequently,
\[n_{\binom k2}\le 2^{k^{(c_4\cdot k^2)}}\,.\qedhere\]
\end{proof}

We are going to prove Corollary \ref{main} by  the
probabilistic method. We will  show that for some $c>0$,  every $n\ge 2^{k^{c\cdot k^2}}$, and  a suitable
function  $p=p(n)$, with positive probability, $G(n,p)$  has simultaneously two properties:
$G(n,p)\rightarrow K_k$ and $G(n,p)\not\supset K_{k+1}$. The following simple lower  bound on
$\PP(G(n,p)\not\supset K_{k+1})$ has been already proved in \cite{folk} (see Lemma~3 therein).
For the sake of completeness we reproduce that short proof here.

\begin{lemma}\label{fkg}
For all $k,n\ge3$ and $C>0$, if $p=Cn^{-2/(k+1)}\le\tfrac12$
then
$$\PP(G(n,p)\not\supset K_{k+1})> \exp(-C^{\binom{k+1}2}n)\,.$$
\end{lemma}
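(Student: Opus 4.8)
The plan is to use the FKG (Harris) inequality to decorrelate the events ``$K_{k+1}$ on a given $(k{+}1)$-set is absent'' over all $\binom{n}{k+1}$ such sets. For each $(k+1)$-element subset $S\subseteq[n]$, let $A_S$ be the event that $G(n,p)[S]\ne K_{k+1}$, i.e.\ that at least one of the $\binom{k+1}{2}$ edges inside $S$ is missing. Each $A_S$ is a decreasing event in the edge set of $G(n,p)$ (adding edges can only destroy it), so by the FKG inequality the family $\{A_S\}$ is positively correlated and
\[
    \PP(G(n,p)\not\supset K_{k+1})=\PP\Big(\bigcap_S A_S\Big)\ge\prod_S\PP(A_S)\,.
\]

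Next I would estimate a single factor. We have $\PP(A_S)=1-p^{\binom{k+1}{2}}$, and since there are $\binom{n}{k+1}\le n^{k+1}$ sets $S$,
\[
    \PP(G(n,p)\not\supset K_{k+1})\ge\big(1-p^{\binom{k+1}{2}}\big)^{\binom{n}{k+1}}
    \ge\big(1-p^{\binom{k+1}{2}}\big)^{n^{k+1}}\,.
\]
Using the elementary bound $1-x\ge\exp(-x/(1-x))\ge \exp(-2x)$ valid for $0\le x\le\tfrac12$ (and here $p^{\binom{k+1}{2}}\le p\le\tfrac12$), this gives
\[
    \PP(G(n,p)\not\supset K_{k+1})\ge\exp\!\big(-2n^{k+1}p^{\binom{k+1}{2}}\big)\,.
\]

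Finally I would substitute $p=Cn^{-2/(k+1)}$, so that $p^{\binom{k+1}{2}}=C^{\binom{k+1}{2}}n^{-\frac{2}{k+1}\cdot\frac{(k+1)k}{2}}=C^{\binom{k+1}{2}}n^{-k}$, whence $n^{k+1}p^{\binom{k+1}{2}}=C^{\binom{k+1}{2}}n$. Plugging this in yields $\PP(G(n,p)\not\supset K_{k+1})\ge\exp(-2C^{\binom{k+1}{2}}n)$, which is weaker than claimed by a factor of $2$ in the exponent; to recover the stated bound one should be slightly more careful, e.g.\ use $\binom{n}{k+1}$ rather than $n^{k+1}$ and the sharper inequality $1-x\ge e^{-x-x^2}$ (or directly $(1-x)\ge e^{-x/(1-x)}$ with $x=p^{\binom{k+1}{2}}$ small), together with the fact that $\binom{n}{k+1}/n^{k+1}$ and the lower-order terms comfortably absorb the constant. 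The only mild subtlety — and the one place to be attentive — is squeezing the constant in the exponent down to exactly $C^{\binom{k+1}{2}}$ (with strict inequality), which is why the hypothesis $p\le\tfrac12$ and the combinatorial slack in $\binom{n}{k+1}\le n^{k+1}/(k+1)!$ are invoked; none of this is deep, it is just bookkeeping around the FKG step, which is the real content.
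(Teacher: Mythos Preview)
Your approach is essentially identical to the paper's: FKG, then $1-x\ge e^{-2x}$ for $0<x\le\tfrac12$, then the substitution $p=Cn^{-2/(k+1)}$. The paper recovers the missing factor of~$2$ exactly as you suggest at the end, by keeping $\binom{n}{k+1}$ in the exponent and using $\binom{n}{k+1}<n^{k+1}/2$ (which holds since $(k+1)!\ge 24$ for $k\ge 3$); no sharper inequality than $1-x\ge e^{-2x}$ is needed.
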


\begin{proof} By applying the FKG inequality (see,
e.g., \cite{JLR}*{Theorem~2.12 and Corollary~2.13},  we obtain the bound
$$\PP(G(n,p)\not\supset K_{k+1})\ge \left(1-p^{\binom{k+1}2}\right)^{\binom n{k+1}}
    \ge
    \exp\left(-2C^{\binom{k+1}2} n^{-k}\tbinom n{k+1}\right)
    >
    \exp\left(-C^{\binom{k+1}2}n\right)\,,$$
where we used the inequalities $\binom n{k+1}< n^{k+1}/2$ and  $1-x\ge e^{-2x}$ for $0<x<\tfrac12$.
\end{proof}

Now, we are ready to complete the proof of Corollary~\ref{main}. For convenience, set $\bar b=b_{\binom k2}$, $\bar
C=C_{\binom k2}$, and $\bar n=n_{\binom k2}$.  Let $n\ge \bar n$ and $p=\bar C n^{-2/(k+1)}$. By
Theorem \ref{rg},
$$\PP(G(n,p)\rightarrow K_k )\ge 1-\exp\left\{-\bar b p\binom n2\right\}.$$
Let, in addition, $n\ge (2\bar C)^{(k+1)/2}$. Then, by Lemma  \ref{fkg},
$$\PP(G(n,p)\not\supset K_{k+1})> \exp\left\{-\bar b p\binom n2\right\}$$
and, in turn,
$$\PP(G(n,p)\rightarrow K_k \mbox{ and }G(n,p)\not\supset K_{k+1})>0.$$
Consequently, for every
$$n\ge n_0:=\max(\bar n,(2\bar C)^{(k+1)/2})$$
 there exists a  graph $G$ with $n$ vertices such that $G\to K_k$ but $G\not\supset K_{k+1}$.
 Finally, by Proposition \ref{e^e}, there exists $c>0$ such that $n_0\le 2^{k^{c\cdot k^2}}$.
 This way we have proved that $f(k)\le n_0\le 2^{k^{c\cdot k^2}}$.

\begin{bibdiv}
\begin{biblist}

\bib{CG}{article}{
   author={Conlon, D.},
   author={Gowers, W. T.},
   title={An upper bound for Folkman numbers},
   note={Preprint}, 
}

\bib{JLR}{book}{
   author={Janson, Svante},
   author={{\L}uczak, Tomasz},
   author={Ruci{\'n}ski, Andrzej},
   title={Random graphs},
   series={Wiley-Interscience Series in Discrete Mathematics and
   Optimization},
   publisher={Wiley-Interscience, New York},
   date={2000},
   pages={xii+333},
   isbn={0-471-17541-2},
   review={\MR{1782847 (2001k:05180)}},
   doi={10.1002/9781118032718},
}

\bib{Steger}{article}{
   author={Nenadov, Rajko},
   author={Steger, Angelika},
   title={A short proof of the random Ramsey theorem},
   journal={Combin. Probab. Comput.},
   volume={25},
   date={2016},
   number={1},
   pages={130--144},
   issn={0963-5483},
   review={\MR{3438289}},
   doi={10.1017/S0963548314000832},
}

\bib{rr}{article}{
   author={R{\"o}dl, Vojt{\v{e}}ch},
   author={Ruci{\'n}ski, Andrzej},
   title={Threshold functions for Ramsey properties},
   journal={J. Amer. Math. Soc.},
   volume={8},
   date={1995},
   number={4},
   pages={917--942},
   issn={0894-0347},
   review={\MR{1276825 (96h:05141)}},
   doi={10.2307/2152833},
}

\bib{rrs}{article}{
   author={R{\"o}dl, Vojt{\v{e}}ch},
   author={Ruci{\'n}ski, Andrzej},
   author={Schacht, Mathias},
   title={Ramsey properties of random $k$-partite, $k$-uniform hypergraphs},
   journal={SIAM J. Discrete Math.},
   volume={21},
   date={2007},
   number={2},
   pages={442--460},
   issn={0895-4801},
   review={\MR{2318677 (2008d:05103)}},
   doi={10.1137/060657492},
}

\bib{folk}{article}{
   author={R{\"o}dl, Vojt{\v{e}}ch},
   author={Ruci{\'n}ski, Andrzej},
   author={Schacht, Mathias},
   title={An exponential-type upper bound for Folkman numbers},
   journal={Combinatorica},
   note={To appear}
   doi={10.1007/s00493-015-3298-1},
}

\bib{Szem}{article}{
   author={Szemer{\'e}di, Endre},
   title={Regular partitions of graphs},
   language={English, with French summary},
   conference={
      title={Probl\`emes combinatoires et th\'eorie des graphes},
      address={Colloq. Internat. CNRS, Univ. Orsay, Orsay},
      date={1976},
   },
   book={
      series={Colloq. Internat. CNRS},
      volume={260},
      publisher={CNRS, Paris},
   },
   date={1978},
   pages={399--401},
   review={\MR{540024}},
}

\end{biblist}
\end{bibdiv}

\end{document}